\newtheorem{introthm}{Theorem}
\colorlet{GREEN}{green}
\colorlet{BLUE}{blue}
\definecolor{darkgreen}{rgb}{0,0.50,0} 
\definecolor{darkred}{rgb}{0.55,0,0}
\definecolor{darkblue}{rgb}{0,0,0.6}
\newcommand\redout{\bgroup\markoverwith
{\textcolor{red}{\rule[.5ex]{2pt}{0.4pt}}}\ULon}
\def\makeautorefname#1#2{\expandafter\def\csname#1autorefname\endcsname{#2}}
\newtheorem{theorem}{Theorem}[section]
\newtheorem{cor}{Corollary}[section]
\newtheorem{prop}{Proposition}[section]
\newtheorem{lem}{Lemma}[section]
\newtheorem{lemma}{Lemma}[section]
\newtheorem*{conj}{Conjecture}
\theoremstyle{definition}
\newtheorem{definition}{Definition}[section]
\newtheorem*{conv}{Convention}
\newtheorem{example}{Example}[section]
\newtheorem{rem}{Remark}[section]
\newtheorem{construction}{Construction}[section]
\let\c@cor=\c@thm
\let\c@prop=\c@thm
\let\c@proposition=\c@thm
\let\c@theorem=\c@thm
\let\c@lem=\c@thm
\let\c@definition=\c@thm
\let\c@conj=\c@thm
\let\c@defn=\c@thm
\let\c@df=\c@thm
\let\c@exmp=\c@thm
\let\c@example=\c@thm
\let\c@rem=\c@thm
\let\c@lemma=\c@thm
\let\c@sch=\c@thm
\let\c@con=\c@thm
\let\c@equation\c@thm
\let\c@construction\c@thm
\newcommand{\res}{\mathrm{res}}
\newcommand{\SK}{\mathrm{SK}}
\newcommand{\SKg}{SK^{G}}
\newcommand{\SKH}{SK^{H}}
\newcommand{\SKk}{SK^{K}}
\newcommand{\id}{\mathrm{id}}
\newcommand{\grp}{\mathrm{grp}}
\newcommand{\Burn}{\mathrm{Burn}}
\def\C{\mathcal{C}}
\def\d{\partial}
\newcommand{\R}{\mathbb R}
\newcommand{\Z}{\mathbb Z}
\newcommand{\cat}[1]{\textup{\textbf{{#1}}}}
\newcommand{\Ab}{\cat{Ab}}
\newcommand{\aug}{\textup{aug}}
\newcommand{\Mor}{\textup{Mor}}
\newcommand{\Mnfld}{\textup{Mfd}}
\newcommand{\tr}{\textup{tr}\,}
\newcommand{\RP}{\mathbb{RP}}
\newcommand{\ChPerf}{\textup{Ch}^{\textup{hb}}}
\newcommand{\Span}{\textup{Span}}
\newcommand\GSK{\mathop{\mbox{$G$-$SK$}}}
\newcommand\CpSK{\mathop{\mbox{$C_p$-$SK$}}}
\newcommand{\calC}{\mathcal{C}}
\newcommand{\Hc}{\mathcal{H}}
\newcommand{\Vc}{\mathcal{V}}
\newcommand{\SKG}{SK^{G,\partial}_n}
\newcommand{\Mfld}{\mathrm{Mfld_n^{G,\partial}}}
\newcommand{\MfldH}{\mathrm{Mfld_n^{H,\partial}}}
\newcommand{\ob}{\mathrm{ob}}
\newcommand{\Kso}[1]{K_0^\square(#1)}
\newcommand{\dsquare}[4]{
	\begin{tikzcd}[ampersand replacement=\&]
	#1\ar[r, >->]\ar[d, hook] \ar[dr, phantom, "\square"]\& #2 \ar[d, hook]\\
	#3 \ar[r, >->] \& #4
	\end{tikzcd}}
\newcommand{\dsquaref}[8]{
	\begin{tikzcd}[ampersand replacement=\&]
	#1\ar[r, >->,"#5"]\ar[d, swap,hook,"#6"] \ar[dr, phantom, "\square"]\& #2 \ar[d, hook,"#8"]\\
	#3 \ar[r, >->,"#7"] \& #4
	\end{tikzcd}}
\newcommand{\rtail}{\rightarrowtail}
\newcommand{\htail}{\hookrightarrow}
\newlength{\storeparskip}
\author[M. Merling]{Mona Merling}
\address{Department of Mathematics, University of Pennsylvania}
\email{mmerling@upenn.edu}
\author[M. Ng]{Ming Ng}
\address{Graduate School of Informatics, Nagoya University}
\email{ng.ming.k0@a.mail.nagoya-u.ac.jp}
\author[J. Semikina]{Julia Semikina}
\address{Paul Painlevé Mathematics Laboratory, University of Lille}
\email{iuliia.semikina@univ-lille.fr}
\author[A. Send\'{o}n Blanco]{Alba Send\'{o}n Blanco}
\address{Department of Mathematics, Vrije Universiteit Amsterdam}
\email{a.sendon.blanco@vu.nl }
\author[L. Williams]{Lucas Williams}
\address{Department of Mathematics, Binghamton University}
\email{lwilli39@binghamton.edu}
\date{}
\keywords{$K$-theory, cut and paste, $K$-theory of manifolds, equivariant}
\subjclass[2020]{Primary  19D55, 19D99, 57R91; Secondary 19D10, 19A49,  55P91, 55S91}
\title{Scissors congruence $K$-theory for equivariant manifolds}
\begin{document}

\begin{abstract} 
We introduce a scissors congruence $K$-theory spectrum which lifts the equivariant scissors congruence groups for compact $G$-manifolds with boundary, and we show that on $\pi_0$ this is the source of a spectrum level lift of the  Burnside ring valued equivariant Euler characteristic of a compact $G$-manifold. We also show that the equivariant scissors congruence groups for varying subgroups assemble into a Mackey functor, which is a shadow of a conjectural higher genuine  equivariant structure.
\end{abstract}

\maketitle

\vspace{-2ex}

\begingroup%
\setlength{\parskip}{\storeparskip}% Restore \parskip within this scope
\setcounter{tocdepth}{1}
\tableofcontents
\endgroup%

\setcounter{section}{0}

\section{Introduction}
 Hilbert's Third Problem asked about scissors congruence invariants of polyhedra in 3 dimensions, which were completely classified over the following 60 years in \cite{dehn, Sydler}.  The problem of determining all scissors congruence invariants of polyhedra in dimensions higher than 4 is still open. For many years, studying scissors congruence revolved around calculating the abelian group of polytopes in a given geometry up to cut-and-paste. In \cite{inna_scissorsKth}, Zakharevich recasts scissors congruence in terms of $K$-theory. This involves constructing a $K$-theory spectrum that not only recovers the classical group of polytopes on $\pi_0$, but also encodes deeper information about scissors congruences in its higher $K$-groups. The study of this new ``higher scissors congruence"  of polyhedra has flourished in recent years \cite{bgmmz, scissors_thom, klmms}.

In the 70s, Karras, Kreck, Neumann, and Ossa \cite{SKbook} introduced a definition of scissors congruence for closed manifolds, also called the $SK$-relation (“schneiden und kleben”, German for “cut and paste”). Given a closed smooth 
manifold, $M$, one can cut it along a codimension 1 separating submanifold, $\Sigma$, with trivial normal bundle and paste back the two pieces along a  diffeomorphism $\Sigma \to \Sigma$  to obtain a new manifold, $M'$.  Two manifolds, $M$ and $M'$, are {\em $SK$-equivalent} if one can be obtained from the other via a finite sequence of $SK$-operations. The groups $\SK_n$, of  diffeomorphism classes of $n$-manifolds up to the $SK$-relation, are computed in \cite{SKbook}. Unlike the case of polyhedra, the Euler characteristic completely determines the scissors congruence class of an unoriented closed manifold in any dimension. 

The notions of $SK$-equivalence and $SK$-groups were generalized to the setting of manifolds with boundary in \cite{witpaper}, which allowed the authors to leverage the framework of  $K$-theory for squares categories \cite{CKMZsquarescategories} to construct a $K$-theory spectrum recovering the $SK$-group as  $\pi_0$. As an application of this result, the authors construct a map of spectra from this $K$-theory spectrum to the $K$-theory of the integers which recovers the Euler characteristic.

In the present paper, we extend the results of \cite{witpaper} to $G$-manifolds and equivariant $SK$-groups.  These groups were extensively studied by many authors such as Rowlett \cite{rowlett1971additive}, J\"anich \cite{janich1969invariants}, Kosniowski \cite{kosniowski}, Hara and Koshikawa \cite{hara1997cutting}, and Komiya \cite{komiya2003cutting},  and have proved to be significantly more difficult to analyze than their non-equivariant counterparts. While the authors of \cite{SKbook} completely classified $SK$-invariants, a complete description of $\GSK$-invariants  is still unknown even in the case of finite abelian groups. 

There is a classical notion of equivariant Euler characteristic, defined for $G$-CW complexes as an alternating sum of cells, valued in the Burnside ring. This has been studied in  \cite{tomDieckbook, luck2005burnside, LuckRosenberg} and is closely related to the Euler characteristics of fixed points, yet does not appear in previous treatments of equivariant scissors congruence for manifolds. We note that this is an example of $\GSK$-invariant. However, in contrast to the non-equivariant case, the equivariant Euler characteristic (or the Euler characteristics of fixed points) does not fully determine the $\GSK$-equivalence class of unoriented $G$-manifolds. Even though the study of $G$-spaces often reduces to the study of their $H$-fixed points for all $H\leq G$, equivariant scissors congruence of manifolds is more subtle. There is a finer set of invariants, the slice type Euler characteristics, which Kosniowski \cite{kosniowski} shows are a full set of invariants when $G$ is a finite abelian group of odd order. He further conjectures that the slice type Euler characteristics are a complete set of invariants for any finite group.

 A natural question, which has been overlooked until now, is whether the equivariant $\SK^H$-groups, for varying subgroups $H$ of a group $G$, form a Mackey functor. Mackey functors are the analogues of abelian groups in equivariant homotopy theory. 
In particular, they arise as homotopy groups of genuine $G$-spectra. Our first main result establishes the existence of this extra structure on equivariant $SK$-groups, which is a shadow of conjectural higher genuine equivariant structure. 

\begin{introthm}\label{thmA}
Let $G$ be a finite group. The equivariant $\SKH$-groups, varying over subgroups $H \leq G$, form a Mackey functor.
\end{introthm}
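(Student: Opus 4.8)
The plan is to realize the assignment $H \mapsto \SKH$ as the restriction to orbits of an additive bifunctor $M$ on the category of finite $G$-sets satisfying base change along pullback squares; by the standard description of Mackey functors as precisely such bifunctors, this gives the theorem. Recall that the data required is: an abelian group $M(S)$ for each finite $G$-set $S$; a contravariant ``restriction'' functoriality $f^{*}$ and a covariant ``transfer'' functoriality $f_{*}$ along maps $f$ of finite $G$-sets, agreeing on objects; the requirement that both carry disjoint unions to direct sums; and the Beck--Chevalley (double coset) condition that for every pullback square of finite $G$-sets, transfer along one edge followed by restriction along the adjacent edge agrees with the composite through the pullback. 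The conjugation isomorphisms and all the Mackey axioms relating restriction, transfer, and conjugation are then automatically encoded.

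First I would take $M(S)$ to be the equivariant scissors congruence group of compact $G$-manifolds with boundary equipped with a $G$-map to $S$, modulo the cut-and-paste relations of the bounded theory imposed compatibly with the map to $S$ (so that all cutting loci and regluing diffeomorphisms are required to respect the structure map). Since $S$ is discrete, a $G$-map $M \to S$ is the same as a $G$-equivariant decomposition of $M$ into open-and-closed pieces, whence $M(S \sqcup S') \cong M(S) \oplus M(S')$; and for $S = G/H$, taking the preimage of the coset $eH$ (with its residual $H$-action) gives a natural isomorphism $M(G/H) \cong \SKH$, with inverse $N \mapsto (G \times_{H} N \to G/H)$. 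For $f \colon S \to T$ I would define $f_{*} \colon M(S) \to M(T)$ by post-composing the structure map with $f$, and $f^{*} \colon M(T) \to M(S)$ by pullback of manifolds along $f$, $(N \to T) \mapsto (N \times_{T} S \to S)$. Under the identification above, for the projection $G/K \to G/H$ attached to an inclusion $K \le H$ these recover respectively induction $N \mapsto H \times_{K} N$ and restriction of the $H$-action to $K$, and for an isomorphism $G/H \xrightarrow{\sim} G/{}^{g}H$ the map $f^{*}$ is conjugation; so once $M$ is constructed it visibly restricts on orbits to the desired package of maps among the $\SKH$.

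Next I would verify well-definedness on $SK$-classes and the bifunctor axioms. The crux is that post-composition with $f$, pullback along $f$, the functor $H \times_{K}(-)$, restriction of actions, and conjugation are all \emph{geometric} operations which commute with disjoint union, with cutting along a $G$-invariant separating codimension-one submanifold with trivial equivariant normal bundle, with regluing along an equivariant diffeomorphism, and with the boundary relations of the bounded theory of \cite{witpaper}; hence each of them sends a defining relation of its source group to a defining relation of its target group and so descends to the $SK$-groups. Functoriality of $f_{*}$ is immediate; functoriality of $f^{*}$ is the canonical associativity of pullbacks; additivity was observed above. The Beck--Chevalley condition then reduces to the base-change isomorphism $N \times_{S}(S \times_{T} S') \cong N \times_{T} S'$ for manifolds over finite $G$-sets, which yields $g^{*} f_{*} = p_{*} q^{*}$ on the nose for a pullback square with legs $f \colon S \to T$, $g \colon S' \to T$ and projections $q \colon S \times_{T} S' \to S$, $p \colon S \times_{T} S' \to S'$.

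I expect the main obstacle to lie in the well-definedness verification, specifically in the claim that pulling back an equivariant $SK$-move along $f$ produces another legitimate equivariant $SK$-move: one must check that $\Sigma \times_{T} S$ is again a $G$-invariant separating codimension-one submanifold with trivial equivariant normal bundle, that the pulled-back regluing diffeomorphism remains equivariant and compatible with the $S$-labeling, and that the subtler relations of the bounded theory (gluings along boundary components, collar and tubular-neighborhood moves) are respected under all of the operations in play --- the equivariance and the presence of boundary each demand care here. As a sanity check, unwinding Beck--Chevalley for the square obtained by pulling back $G/L \to G/H$ along $G/K \to G/H$ --- whose pullback is $\coprod_{KgL \in K\backslash H/L} G/(K \cap {}^{g}L)$ --- reproduces the classical double coset formula
\[
\res^{H}_{K}\circ \mathrm{ind}^{H}_{L} \;=\; \sum_{KgL \in K\backslash H/L} \mathrm{ind}^{K}_{K \cap {}^{g}L}\circ c_{g}\circ \res^{L}_{K^{g}\cap L}
\]
(with ${}^{g}L = gLg^{-1}$ and $K^{g} = g^{-1}Kg$) at the level of $G$-manifolds; one could alternatively take this formula, checked by hand via the decomposition of $H \times_{L} N$ into $K$-orbits, as the starting point of a more direct proof avoiding the bifunctor formalism altogether.
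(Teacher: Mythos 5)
Your proposal is correct and follows essentially the same route as the paper: both define relative (``singular'') $SK$-groups of $G$-manifolds over finite $G$-sets, identify $\SKg(G/H)$ with $\SKH$ via preimages of $eH$ and $G\times_H(-)$, and obtain the Mackey structure from pullback along one leg and postcomposition along the other, with additivity coming from disjoint unions of pullbacks. The only difference is packaging: the paper verifies Lindner's span-category definition directly (a single additive functor on the Burnside category $\mathcal{B}_G$, whose functoriality under composition of spans is exactly your Beck--Chevalley condition), whereas you split the data into a Dress-style restriction/transfer bifunctor pair with base change --- the two formulations are interchangeable and the geometric verifications are identical.
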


We use the framework of squares $K$-theory from  \cite{CKMZsquarescategories} to  lift the equivariant $SK$-groups to the spectrum level, generalizing the result in \cite{witpaper}. Given a finite group $G$, we construct a scissors congruence $K$-theory spectrum, denoted by $K^\square(\Mfld)$, which recovers cut-and-paste groups of $G$-manifolds with boundary as $\pi_0$. For each subgroup $H \leq G$ there exists a map of spectra from $K^\square(\MfldH)$ to the $H$-fixed points of the equivariant $A$-theory of \cite{MMEquivariantATheory}. To lift the equivariant Euler characteristic to $K$-theory spectra, we use the equivariant linearization map of \cite{CCMlinearization}, the source and target of which are equivariant $A$-theory and the equivariant $K$-theory of the constant coefficient system $\underline{\Z}$, respectively.

\begin{introthm}\label{thmB} For each subgroup $H \leq G$, there exists a map of spectra 
$$K^\square(\MfldH) \to K_G(\underline{\mathbb{Z}})^H,$$  which recovers the equivariant Euler characteristic, valued in the Burnside ring, on $\pi_0$.
\end{introthm}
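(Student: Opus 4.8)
The plan is to realize the asserted map as a composite
\[
\Ks{\MfldH} \xrightarrow{\;\alpha_H\;} A_G(\ast)^H \xrightarrow{\;\ell^H\;} K_G(\underline{\Z})^H ,
\]
where $\alpha_H$ is the comparison map to the $H$-fixed points of the equivariant $A$-theory $A_G$ of \cite{MMEquivariantATheory}, and $\ell\colon A_G(\ast)\to K_G(\underline{\Z})$ is the equivariant linearization map of \cite{CCMlinearization}, which is a morphism of genuine $G$-spectra and to which we apply the $H$-fixed point functor. Since $\ell$ is already a map of $G$-spectra, the only geometric input needed to produce the map of spectra is $\alpha_H$.

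If $\alpha_H$ is not already available from an earlier section, I would construct it exactly as in the non-equivariant case of \cite{witpaper}. Model $A_G(\ast)$, respectively its $H$-fixed points, as the squares $K$-theory of a (Waldhausen) category of finite retractive $G$-CW spaces over a point, respectively finite pointed $H$-spaces; then $M \mapsto M_+$ defines a functor out of the squares category $\MfldH$. The point is that this functor sends a distinguished square witnessing a cut-and-paste move --- the span $M_1 \hookleftarrow \Sigma \hookrightarrow M_2$ with $M=M_1\cup_\Sigma M_2$, together with the re-glued manifold $M'=M_1\cup_{\Sigma,\phi}M_2$ --- to a homotopy cocartesian square of pointed $H$-spaces, the self-equivalence $\phi$ of $\Sigma$ not affecting the homotopy pushout. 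Functoriality of squares $K$-theory \cite{CKMZsquarescategories} then produces $\alpha_H$ on spectra, and cocartesianness guarantees that the $SK^H$-relations are already trivialized on the nose in the target.

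Granting $\alpha_H$, it remains to compute $\pi_0$ of the composite. I would argue as follows: $\pi_0 \Ks{\MfldH}\cong SK^{H,\partial}_n$ by the identification established for our scissors congruence $K$-theory spectrum (the equivariant counterpart of \cite{witpaper}); next, $\pi_0\bigl(K_G(\underline{\Z})^H\bigr)\cong A(H)$, the Burnside ring of $H$, since a finite $H$-set $S$ yields the finitely generated projective $\underline{\Z}$-module $\underline{\Z}[S]$ and every such projective is a sum of these, and $\ell$ induces the evident identity under this identification; finally, the class of $M_+$ in $\pi_0\bigl(A_G(\ast)^H\bigr)\cong A(H)$ is the reduced equivariant Euler characteristic $\sum_i(-1)^i[\,i\text{-cells of }M\,]\in A(H)$, obtained by choosing an $H$-CW structure on $M$ and running through the cofiber sequences that attach cells, using that in $A$-theory the class of a finite pointed space is its reduced Euler characteristic. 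By definition (cf.\ \cite{tomDieckbook, luck2005burnside}) this expression is the Burnside-ring valued equivariant Euler characteristic $\chi^H(M)$, so chasing these identifications through the composite shows that on $\pi_0$ it is $\chi^H\colon SK^{H,\partial}_n \to A(H)$. Well-definedness on $SK^{H,\partial}_n$ is automatic, but can also be seen directly from inclusion--exclusion for the cellular Euler characteristic under gluing along $\Sigma$, which is unchanged by $\phi$.

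The step I expect to be the main obstacle, assuming $\alpha_H$ is given, is the bookkeeping in the last paragraph: reconciling the several models of the Burnside ring ($K_0$ of finite $H$-sets, $\pi_0 K_G(\underline{\Z})^H$, $\pi_0 A_G(\ast)^H$, and the cellular-cochain description of $\chi^H$) and checking that $\ell$ together with the relevant change-of-groups comparisons for fixed points are compatible, so that the composite is \emph{literally} $\chi^H$ rather than $\chi^H$ post-composed with some automorphism of $A(H)$. If instead $\alpha_H$ has to be built here, the real work is verifying that $M\mapsto M_+$ respects the full squares-category structure --- in particular its interaction with gluings along the boundary and with finite domination in the equivariant setting --- and that all cut-and-paste squares land among the cocartesian ones.
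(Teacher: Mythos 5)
Your construction of the map itself is exactly the paper's: the composite $L^H\circ\alpha_H$, with $\alpha_H$ induced by the squares functor $M\mapsto M_+$ into the squares structure on $\mathcal{R}^H(*)$ and $L$ the equivariant linearization of \cite{CCMlinearization}. (One small modelling point: the distinguished squares in $\MfldH$ are pushouts of $n$-dimensional pieces along $SK$-embeddings, so the corner object is a collar $\Sigma\times[0,1]$ rather than the codimension-one $\Sigma$ itself; this does not affect the argument.)

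The genuine problem is your identification $\pi_0\bigl(K_G(\underline{\Z})^H\bigr)\cong A(H)$ and the justification ``every finitely generated projective is a sum of the $\underline{\Z}[S]$.'' This is false in general: by the tom Dieck style splitting one has $\pi_0\bigl(K_G(\underline{\Z})^H\bigr)\cong\prod_{(J)\in C_H}K_0(\Z[WJ])\cong \Burn(H)\oplus\prod_{(J)}\widetilde{K_0}(\Z[WJ])$, and the reduced classgroups $\widetilde{K_0}(\Z[WJ])$ are frequently nonzero, which is exactly the failure of projectives to be sums of permutation modules. The correct (and provable) statement, which is how the theorem is formulated in the body of the paper, is that the composite followed by the projection onto the $\Burn(H)$-summand (induced by the augmentations $\Z[WJ]\to\Z$) equals $\chi_H$; your version as stated asserts more than is true of the target. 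Relatedly, your appeal to ``$\ell$ induces the evident identity'' is precisely where the content lies: the paper substitutes for this an explicit description of the splitting maps $t_J\colon A_G(*)^H\to A(BWJ)$, sending $M_+$ to $(M_+^{J}/\cup_{K\gneq J}M_+^{K})_{hWJ}$, and then computes each augmented linearization as the relative Euler characteristic $\chi(M^J/WJ,\ \cup_{K\gneq J}M^K/WJ)$, matching the standard formula for $\chi_H(M)$ in $\Burn(H)$. Your alternative route --- computing $[M_+]\in\pi_0 A_G(*)^H\cong\Burn(H)$ directly as $\sum_i(-1)^i[\mathrm{Cell}_i(M)]$ via additivity over an $H$-CW structure --- is a legitimate and arguably cleaner way to handle the $A$-theory side, but it still requires verifying that this identification of $\pi_0 A_G(*)^H$ is compatible with the splitting used to read off the answer after linearization, i.e., the very bookkeeping you defer; without it, one only knows the composite is $\chi_H$ up to an automorphism of $\Burn(H)$, as you yourself note.
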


It is straightforward to check that on $\pi_0$, the map $K^\square(\MfldH) \to K_G(\underline{\mathbb{Z}})^H$  is a map of Mackey functors. This suggests the following conjecture.\footnote{The conjectured $G$-spectrum and lift of the equivariant Euler characteristic map were constructed in the recently announced preprint \cite{CalleChan}.}

\begin{conj}
The $\SK^G$ Mackey functor is $\pi_0$ of a genuine $G$-spectrum  $K^\square_G(\Mfld)$, and there is a map of genuine $G$-spectra $K^\square_G(\Mfld)\to K_G(\underline{\mathbb{Z}})$, which lifts the equivariant Euler characteristic.  \end{conj}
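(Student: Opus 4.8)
The plan is to realise $K^\square_G(\Mfld)$ as a genuine $G$-spectrum by exhibiting it as a \emph{spectral Mackey functor}, i.e.\ a product-preserving functor from the span category $\Span(\mathrm{Fin}_G)$ of finite $G$-sets to spectra whose value at $G/H$ is $K^\square(\MfldH)$. By the Guillou--May and Barwick identification of genuine $G$-spectra with spectral Mackey functors, such a functor \emph{is} a genuine $G$-spectrum, and its homotopy Mackey functor at $G/H$ is $\pi_\ast K^\square(\MfldH)$; in particular $\underline{\pi_0}$ recovers the assignment $H\mapsto \SKH_n$, which by \autoref{thmA} is the $\SK^G$ Mackey functor, provided the restriction and transfer maps produced by the construction match those of \autoref{thmA} --- which they will, by design.

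First I would promote the assignment $H\mapsto \MfldH$ to a functor $\underline{\Mfld}$ out of $\Span(\mathrm{Fin}_G)$ valued in squares categories. The natural model sends a finite $G$-set $T$ to the squares category of compact $G$-manifolds with boundary equipped with a $G$-map to $T$: since a $G$-map $M\to G/H$ is the same datum as the $H$-manifold sitting over $eH$, the value at $G/H$ is $\MfldH$, and a disjoint union $T\sqcup T'$ goes to a product of squares categories. A map $f\colon T'\to T$ of $G$-sets induces a restriction functor by pullback of the structure map (for $G/K\to G/H$ this is $\mathrm{res}^H_K$) and an induction functor by postcomposition with $f$ (for $G/K\to G/H$ this is $H\times_K(-)$). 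Both are square-exact: one checks that pulling back and inducing a separating codimension-one submanifold with trivialised normal bundle again has that form, that the distinguished cut squares are preserved, and that boundaries behave correctly --- geometric but routine, the only subtlety being that induction multiplies components. The substantive point is the Beck--Chevalley (double-coset) compatibility: for a pullback square of finite $G$-sets, restricting-then-inducing agrees, up to coherent natural isomorphism, with inducing-then-restricting. Granting this, the resulting data (restriction functors, induction functors, and Beck--Chevalley isomorphisms) is exactly the input to the unfurling machinery of Barwick (see also Bohmann--Osorno for the permutative-categorical analogue), which assembles it into a functor $\underline{\Mfld}\colon \Span(\mathrm{Fin}_G)\to \cat{SqCat}$; postcomposing with the squares $K$-theory functor $K^\square$ of \cite{CKMZsquarescategories}, which preserves finite products, yields the spectral Mackey functor $K^\square_G(\Mfld):=K^\square\circ\underline{\Mfld}$.

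For the map of genuine $G$-spectra $K^\square_G(\Mfld)\to K_G(\underline{\Z})$ I would factor it, exactly as in \autoref{thmB}, through equivariant $A$-theory. The ``underlying retractive $G$-space'' construction used to build the maps $K^\square(\MfldH)\to A_G(\ast)^H$ from \cite{MMEquivariantATheory} commutes with restriction and with induction --- the underlying space of $H\times_K N$ is $H\times_K(\text{underlying space of }N)$, and likewise for restriction --- so it refines to a morphism of squares-categorical Mackey functors over $\Span(\mathrm{Fin}_G)$; applying $K^\square$ and comparing with the $\Span(\mathrm{Fin}_G)$-model of equivariant $A$-theory gives a map of genuine $G$-spectra $K^\square_G(\Mfld)\to A_G(\ast)$. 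Composing with the equivariant linearization map $A_G(\ast)\to K_G(\underline{\Z})$ of \cite{CCMlinearization}, which is already a map of genuine $G$-spectra, produces the desired morphism. On $\underline{\pi_0}$ it restricts at each $G/H$ to the map of \autoref{thmB}, hence recovers the Burnside-ring-valued equivariant Euler characteristic; since that is already known (the discussion following \autoref{thmB}) to be a map of Mackey functors out of the $\SK^G$ Mackey functor, the $\pi_0$ assertion of the conjecture is then formal.

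The main obstacle I anticipate is coherence: making $H\mapsto \MfldH$ into an honest functor on $\Span(\mathrm{Fin}_G)$, rather than one defined only up to isomorphism, requires either a point-set model of ``$G$-manifold over $T$'', of pullbacks, and of induction for which the double-coset comparisons hold strictly, or else a rectification result fed the weak data; one must also confirm that $K^\square$ of \cite{CKMZsquarescategories} is functorial and product-preserving to the degree the assembly needs (a mild enhancement, or an extension of the Bohmann--Osorno-type machine from permutative to squares categories, may be required). A secondary, genuinely geometric, difficulty is verifying that cut-and-paste commutes with induction on the nose --- including its interaction with the boundary --- which is the technical heart of both the square-exactness and the Beck--Chevalley checks. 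Should the span-category bookkeeping prove unwieldy, an alternative is to construct $K^\square_G(\Mfld)$ directly as a genuine orthogonal $G$-spectrum from a $G$-$\Gamma$-space (or $G$-categorical) enhancement of the $S_\bullet$-type construction underlying $K^\square$, parallel to the construction of equivariant $A$-theory in \cite{MMEquivariantATheory}, and build the map to $K_G(\underline{\Z})$ at that level.
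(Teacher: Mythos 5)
This statement is stated in the paper as a \emph{conjecture}: the paper offers no proof of it, and a footnote together with the remark after \autoref{thm:PresSK} explicitly defer the construction to the recently announced preprint \cite{CalleChan}. Your outline is exactly the route the paper itself envisions there --- promote $H\mapsto\MfldH$ to a Mackey functor valued in squares categories (the paper's \autoref{Mackeyconstruction} already does this on the level of categories-over-$G$-sets), feed it to a spectral-Mackey-functor machine, and map to $A_G(\ast)$ and then $K_G(\underline{\Z})$ via $M\mapsto M_+$ and equivariant linearization. So as a research plan it is sound and well aligned with how the problem was in fact resolved.

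It is not, however, a proof, and you have (honestly) parked the entire mathematical content in the step you call ``the unfurling machinery of Barwick (see also Bohmann--Osorno).'' Neither of those machines applies as stated: Barwick's unfurling takes Waldhausen (or exact) categories as input, and Bohmann--Osorno takes permutative categories; squares categories are neither, and $K^\square$ is not known to factor through either framework. Building a multiplicative/additive $K$-theory machine for squares categories with the coherences needed to produce a functor $\Span(\mathrm{Fin}_G)\to\Sp$ --- including verifying that $K^\square$ sends products of squares categories to products of spectra and that the Beck--Chevalley isomorphisms can be rectified --- is precisely the new framework worked out in \cite{CalleChan}; it cannot be waved through by analogy. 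A second gap of the same kind sits in your construction of the map to $A_G(\ast)$: the target is modeled as a spectral Mackey functor of \emph{Waldhausen} categories in \cite{MMEquivariantATheory}, so you need a comparison between the squares-category Mackey machine and the Waldhausen one that is compatible with the levelwise equivalences $K(\mathcal{R}^H(\ast))\simeq K^\square(\mathcal{R}^H(\ast)^\square)$, not just a levelwise map. Until those two machines are supplied, what you have is a correct reduction of the conjecture to the theorem of \cite{CalleChan}, not an independent proof.
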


\subsection*{Overview} The paper is organized as follows. In \autoref{section: GSK inv}, we introduce equivariant $SK$-groups for manifolds with boundary and contextualize them with known results on the $\GSK$-invariants of closed manifolds. We also discuss the Burnside ring valued equivariant Euler characteristic. In \autoref{section: Mackey}, we show how the $\SKH$ groups, for $H \leq G$, assemble into a Mackey functor. In \autoref{section :equivariant squares spectrum}, we construct an equivariant scissors congruence $K$-theory spectrum, for each group $H$, recovering $\SKH$ on $\pi_0$. This spectrum is then related to $A_G(*)^H$, the $H$-fixed points of equivariant $A$-theory of a point. Via equivariant linearization, it is further related to $K_G(\underline{\mathbb{Z}})^H$, the $H$-fixed points of the equivariant $K$-theory of the coefficient system $\underline{\Z}$. The key result of this section is that the resulting map to $K_G(\underline{\mathbb{Z}})^H$ lifts the equivariant Euler characteristic to the level of $K$-theory spectra.

\begin{conv} In this paper, $G$ will always be a finite group unless stated otherwise. We use the term {\em $G$-manifold} to
mean an unoriented compact smooth manifold (possibly with boundary) equipped with a smooth $G$-action.     
\end{conv}

\subsection*{Acknowledgments} 
 We thank the organizers of the Collaborative Research Workshop on K-theory and Scissors Congruence, the Vanderbilt University's Mathematics Department for their hospitality during the workshop, and the NSF for supporting this workshop as part of a Focused Research Collaboration grant. The authors would like to acknowledge contributions  to this paper arising from conversations with Maxine Calle, David Chan, Johannes Ebert, Tom Goodwillie, Renee S. Hoekzema, Achim Krause, Wolfgang L{\"u}ck, Cary Malkiewich, and Antoine Touz\'e.  M.M. was partially supported by NSF DMS grants CAREER 1943925 and FRG 2052988. M.N. was partially supported by EPSRC Grant EP/V028812/1 and a FY2024 JSPS Postdoctoral Fellowship (Short-Term). J.S. was partially supported by the Labex CEMPI (ANR-11-LABX-0007-01). L.W. was partially supported by NSF DMS-2052923.

\section{Equivariant cut-and-paste invariants} \label{section: GSK inv}

In this section, we introduce equivariant cut-and-paste groups for $G$-manifolds with boundary, simultaneously generalizing both the definition from \cite{witpaper} to the equivariant case, and  the equivariant definition from \cite{SKbook} to $G$-manifolds with boundary. 
% \lucas{replace ``simultaneously generalizing the definition from \cite{witpaper} to the equivariant case and  the definition in \cite{SKbook} for closed equivariant manifolds" with ``which generalizes the definitions of both \cite{witpaper} and \cite{SKbook}"}   
\subsection{$SK$-equivalence for $G$-manifolds} The $SK$-groups (German ``schneiden und kleben"=``cut and paste") of closed $G$-manifolds were introduced in \cite{SKbook}. In order to define a scissors congruence $K$-theory for $G$-manifolds, we must work in the category of $G$-manifolds with boundary so that the category contains the pieces in a cut-and-paste operation. Our definition is different from the definition in \cite{hara1997cutting}, where cutting  the boundary is allowed---as in the non-equivariant definition from \cite{witpaper}, we define the cut-and-paste operation away from the existing boundary.

Let $M$ be a $G$-manifold. Let $\Sigma\subseteq M$ be a $G$-invariant codimension 1 smooth submanifold, disjoint from $\partial M$, with normal bundle given by $\Sigma\times \R$ such that $G$ acts trivially on $\R$. As in the non-equivariant case, there is no loss in generality in requiring that $\Sigma$ separates $M$ into two disjoint manifolds. Define a $\GSK$-operation on $M$ as follows.

\begin{definition}  \label{def: GSK operation}
Cut $M$ along $\Sigma$ as above, obtaining the disjoint union of two $G$-manifolds $M_1$ and $M_2$, each with part of their boundary equivariantly diffeomorphic to $\Sigma$. Then paste back the two pieces together along an equivariant diffeomorphism  $\phi \colon \Sigma \to \Sigma$. We say that $M_1\cup_\phi M_2$ is obtained from $M$ by a \textit{$\GSK$-operation}.

\end{definition}
We emphasize that we do not allow boundaries to be cut, and we require that all boundaries which come from cutting are pasted back together, leaving the original boundary of a manifold untouched by the cut-and-paste operation. 

   \begin{definition}
We say that two $G$-manifolds are $\GSK$-equivalent if one can be obtained from the other by a finite sequence of $\GSK$-operations. 
   \end{definition}

  \autoref{SKexample} depicts an example of two distinct $C_2$-actions on $S^2\sqcup T^2$ which are $C_2$-$SK$-equivalent.  The red lines in the pictures indicate the $C_2$-fixed points. The diagonal action on the torus is given by reflection across the diagonal of the square before identifying opposite sides.\footnote{In \cite[Theorem 1.11]{dugger2019involutions}, Dugger classified the six possible $C_2$-actions on the torus. The one we call the diagonal action is isomorphic to $T^{\mathrm{anti}}_0$+$[S^{1,0}-\text{antitube}]$ in his notation. It is given by cutting out two disjoint disks from $S^2$ with an antipodal action and sewing in a cylinder with a flip action (i.e. an $S^{1,0}$-antitube) and is depicted in the middle section of \autoref{SKexample}.} 
    \begin{figure} 
\centering
%\includegraphics[width=\textwidth]{SKExample1.png}
%\includesvg[width=\textwidth]{SKGExampleFINAL}
\includegraphics[width=\textwidth]{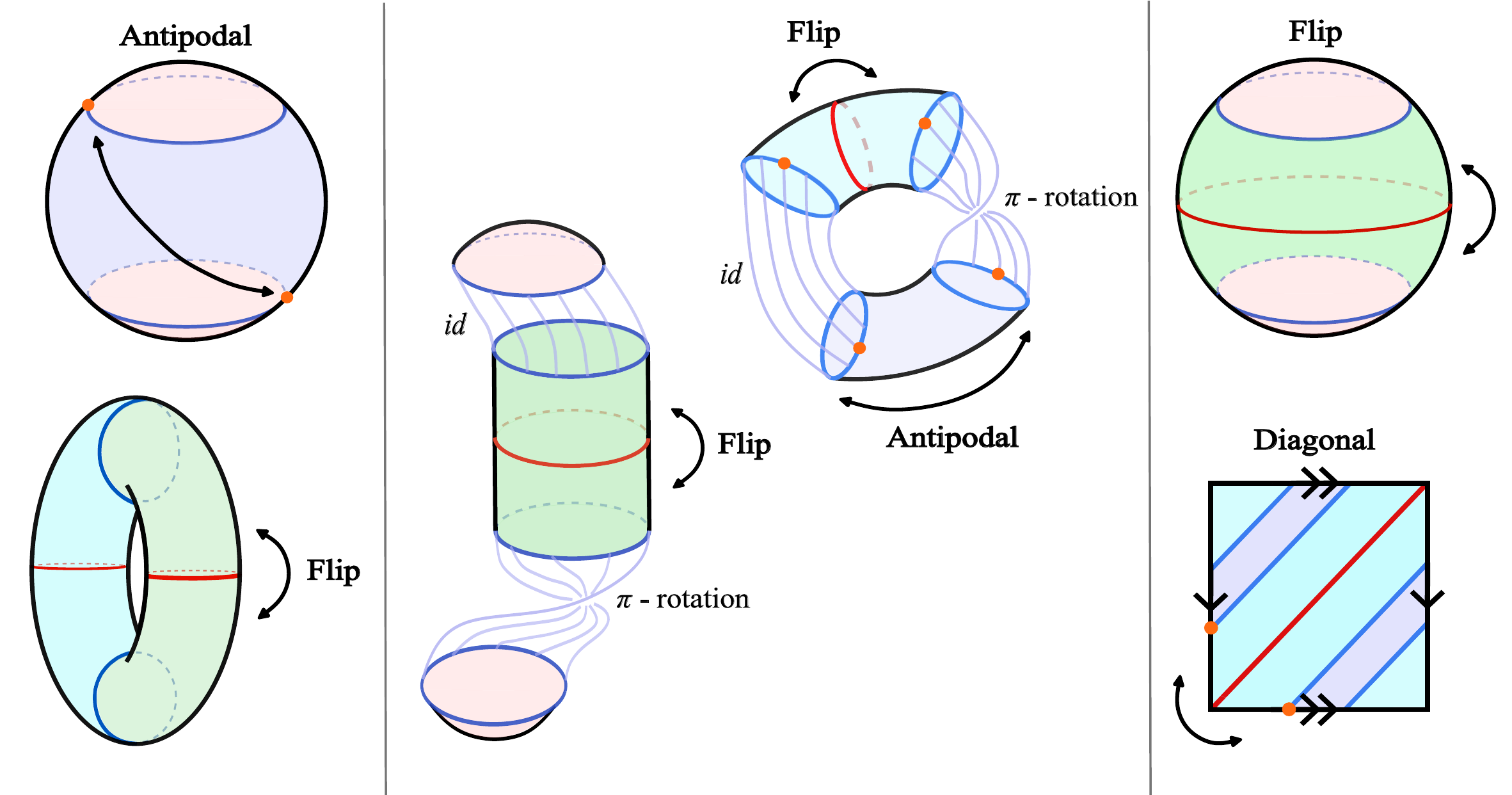}
\caption{$C_2$-SK equivalence between $S^2_{\mathrm{antipodal}}\sqcup T^2_{\mathrm{flip}}$ and $S^2_{\mathrm{flip}}\sqcup T^2_{\mathrm{diagonal}}$} 
\label{SKexample}
\end{figure}

\begin{definition}
    Let $\mathcal{M}_{n}^{G, \d}$ be the monoid of $G$-diffeomorphism classes of $n$-dimensional $G$-manifolds with boundary under disjoint union. A $\GSK$-invariant is an abelian group valued map of monoids out of $\mathcal{M}_{n}^{G, \d}$, which is constant on $\GSK$-equivalence classes. 
\end{definition}
We define a group with the universal property that every $\GSK$-invariant factors through it. 
\begin{definition}
The \emph{equivariant scissors congruence group $\SKG$ of $G$-manifolds} is the quotient of the group completion $(\mathcal{M}_{n}^{G, \d})^{\grp}$ by the $\GSK$-equivalence relation.
\end{definition}

\begin{rem}\label{SKsum}
In \cite{witpaper}, the authors show that non-equivariantly the scissors congruence group of manifolds with boundary splits into a direct sum of the scissors congruence group of closed manifolds and the group completion of the monoid of nullcobordant manifolds. The latter is the group of all possible boundaries. It is unknown whether a similar decomposition exists for equivariant scissors congruence groups. The proof of the non-equivariant splitting relies on the classification of $SK$-invariants. Since $\GSK$-invariants are not entirely classified, the proof of the splitting in \cite{witpaper} does not readily generalize to the equivariant setting.
\end{rem}

% Note that throughout this paper we mainly consider manifolds with boundary of the fixed dimension $n$ and hence we will simply write $\SK_G$ for the equivariant scissors congruence group instead of $\SK_{G, n}^\d$.

\subsection{Equivariant Euler characteristic}\label{equivEulersection}  In this subsection, we define the equivariant Euler characteristic. The equivariant Euler characteristic of a $G$-manifold is a $\GSK$-invariant. While the Euler characteristic of a manifold completely determines its $SK$ equivalence class, we will see that this is not true equivariantly.

\begin{definition}\label{Burnsidering}
 The \emph{Burnside ring} $\Burn(G)$ is the group completion of the monoid of isomorphism classes of finite $G$-sets under disjoint union. The commutative ring structure is given by taking products of $G$-sets.
\end{definition}

A $G$-CW complex  is built out of cells of type $G/H$, that is, of the form $G/H\times D^k$ for $H \leq G$, so that each fixed point subspace is a subcomplex. By \cite{illman},  every compact smooth $G$-manifold is homeomorphic to a finite $G$-CW complex.

\begin{definition}\label{Gchi}
    Let $X$ be a finite $G$-CW complex. The \emph{equivariant Euler characteristic}, $\chi_G(X)$, is defined as $$\chi_G(X)=\sum_k (-1)^k [\mathrm{Cell}_k(X)]\in \Burn(G),$$ where $\mathrm{Cell}_k(X)$ is the $G$-set of $k$-dimensional cells of $X$. 
\end{definition}

%For a detailed treatment of equivariant Euler charactersitic, we refer the reader to \cite{tomDieckbook, LuckRosenberg}, but we recall a few facts about this invariant. First, note that the definition does not depend on the chosen cell structure--in fact if for $G$-spaces $X,Y$, the Euler characteristics of fixed points $\chi(X^H)=\chi(Y^H)$ agree for all subgroups $H$ (note these only depend on the conjugacy class of $H$), then the equivariant Euler characteristics  $\chi_G(X)=\chi_G(Y)$ agree. In order to see this, note that there is an injective map from $A(G)$ to $\prod_{(H)} \Z$, where the product is indexed over conjugacy classes of subgroups, which in the $(H)$ component of the product sends a $G$-set $Z$ to the number of fixed points $|Z^H|$. For a finite $G$-CW complex $X$, in the $(H)$ component of the product, this map sends $\chi_G(X)$ to $\chi(X^H)$. Thus the equivariant Euler characteristic is invariant under $G$-weak equivalences. 

Let $C_G$ be the set of conjugacy classes of subgroups of $G$. Following \cite[\S 1]{luck2005burnside}, we note that the character map $\mathrm{char}^G\colon \Burn(G) \to \prod_{(H)\in C_G} \Z$ which, in the $(H)$-component, sends a $G$-set to the number of $H$-fixed points, is injective. For a finite $G$-CW complex $X$, the $(H)$-component of the image of $\chi_G(X)$ under this map is $\chi(X^H)$. Therefore, the equivariant Euler characteristic is invariant under $G$-weak equivalences and thus, independent of choosing a $G$-CW structure. 

The equivariant Euler characteristic of a $G$-CW complex $X$ can be expressed as (see e.g. \cite[Lemma 1.7]{luck2005burnside})
\begin{equation}\label{Eulerformula} 
    \chi_G(X)= \sum_{(H) \in C_G} \chi(X^H/WH,\ \bigcup_{K \gneq H}X^K /WH)[G/H],
\end{equation}
where $WH=N_G(H)/H$ is the Weyl group of $H$ in $G$. In particular, when $X$ has a free $G$-action, $\chi_G(X)=\chi(X/G)[G/e]$. 

The usual additivity formula for Euler characteristic still holds equivariantly,
$$\chi_G(X\cup Y)= \chi_G(X)+\chi_G(Y)-\chi_G(X\cap Y),$$
so the equivariant Euler characteristic $\chi_G$ of a $G$-manifold is a $\GSK$-invariant. Thus, the tuple of fixed point Euler characteristics $(\chi(X^H))_{(H)}\in \prod_{(H)} \Z$ is also a $\GSK$-invariant. Alternatively, we could directly observe that the Euler characteristics of the fixed points are $\GSK$-invariants as taking fixed points commutes with pushouts along closed inclusions.

The following example uses the equivariant Euler characteristic to demonstrate that $\GSK$-equivalence is a strictly finer relation than non-equivariant $SK$-equivalence for $G$-manifolds.

  \begin{example}
         Let $S^1_{\mathrm{triv}}$ be the circle with trivial $C_2$-action and let $S^1_{\mathrm{flip}}$ be the circle with $C_2$ acting by a reflection across a diameter, so that the flip action has exactly two fixed points. The Euler characteristics of the $C_2$-fixed points of these two spaces differ: $\chi((S^1_{\mathrm{triv}})^{C_2})=0$, while $\chi((S^1_{\mathrm{flip}})^{C_2})=2$, so the circle with trivial action is not $C_2$-$SK$ equivalent to the circle with flip action. 
  \end{example} 

  \subsection{Slice type Euler characteristic}

As noted in \autoref{SKsum}, if $G$ is trivial, then the Euler characteristic and the diffeomorphism class of the boundary determine the $SK$-class of an unoriented manifold (see \cite{SKbook} and \cite{witpaper}). In general, the equivariant Euler characteristic is not enough to determine the $\GSK$-class. This is illustrated by \autoref{Ex: euler char of fixed points is not enough}.

Equivariant $SK$-groups for closed manifolds were extensively studied in \cite{kosniowski} by using the notion of slice types (for details, see e.g. \cite[\S II.4]{janich1966differenzierbare}). This definition naturally extends to $G$-manifolds with boundary.

\begin{definition} 
Let $M$ be a $G$-manifold, let $x\in M$ and let $G_x$ be the stabilizer group of $x$.

	\begin{enumerate}
		\item The {\em slice type of $M$ at $x$} is the conjugacy class of the pair $[G_x,V_x]$,
		where $V_x$ is the non-trivial part of the $G_x$-representation $T_xM$, namely,
		$$T_xM=V_x\oplus \R^p \text{~and~} V_x^{G_x}=\{0\}.$$
       Note that if $x \in \partial M$ we consider $T_x(\partial M)$ instead since the normal direction to the boundary is trivial.

         \item A slice type of $G$ is a conjugacy class of a pair $[H,V]$, where $H$ is a subgroup of $G$ and $V$ is an isomorphism class of a real $H$-representation such that $V^H=0$. Here $V$ can be zero dimensional.
         
		\item Let $[H,V]$ be a slice type of $G$. The {\em $[H,V]$-stratum of $M$} is 
		$$M_{[H,V]}:=\{x\in M\mid [G_x,V_x]=[H,V]\}.$$
	\end{enumerate}
\end{definition}

The slice theorem (see e.g. \cite[Corollary VI.2.4]{bredon1972introduction}) guarantees that the slice type completely determines the local behavior of $M$. More precisely, it states that there exists a $G$-invariant open neighbourhood of $x\in M$ which is $G$-diffeomorphic to $G\times_{G_x} T_x M$. In particular, this ensures that $M_{[H,V]}$ is a smooth $G$-submanifold of $M$ for any slice type $[H,V].$ The submanifold $M_{[H,V]}$ might have boundary (which must be a submanifold of $\partial M$).

\begin{example}
    Let $D^2$ be the disk with $C_2$-action given by reflection across a diameter. The space of $C_2$-fixed points is the closed segment in the middle of the disk. For every point $x$ in the interior of the $C_2$-fixed points, $T_xD^2 = \R\oplus\R^\sigma$, where $\R^\sigma$ is the sign representation. Thus, $V_x=\R^\sigma$ in this case. For a  boundary point $x$ of the  $C^2$-fixed points, which lies on the boundary  of the disk, we  again have $V_x=\R^\sigma$. Thus, the stratum $D^2_{[C_2, \R^\sigma]}$ is the closed segment in the middle of the disk, a submanifold with boundary.
\end{example}

\begin{definition}
Let $M$ be a $G$-manifold and let $[H,V]$ be a slice  type of $G$. The corresponding \emph{slice type Euler characteristic} is
$$ \chi_{[H,V]}(M):=\chi(M_{[H,V]}).$$	
\end{definition}
For closed $G$-manifolds, the following lemma dates back to \cite{SKbook}.
\begin{lemma}
   The slice type Euler characteristic, $\chi_{[H,V]}$, is a $\GSK$-invariant for any slice type $[H,V]$ of $G$.
\end{lemma}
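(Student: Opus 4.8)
The plan is to show that $\chi_{[H,V]}$ descends through the group completion to a well-defined homomorphism on $(\mathcal{M}_n^{G,\partial})^{\grp}$ and that it is constant on $\GSK$-equivalence classes, which by the universal property of $\SKG$ is exactly what it means to be a $\GSK$-invariant. Additivity under disjoint union is immediate from additivity of the ordinary Euler characteristic together with the observation that $(M \sqcup M')_{[H,V]} = M_{[H,V]} \sqcup M'_{[H,V]}$; this also shows $\chi_{[H,V]}$ factors through the group completion. The substantive point is invariance under a single $\GSK$-operation, and it suffices to check this since the general relation is generated by such operations.

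So fix a $\GSK$-operation: $M$ is cut along a $G$-invariant separating codimension-one submanifold $\Sigma$ with trivial normal bundle, producing $M_1 \sqcup M_2$, and then reglued via an equivariant diffeomorphism $\phi\colon \Sigma \to \Sigma$ to form $M' = M_1 \cup_\phi M_2$. The first key step is to understand how the stratum $M_{[H,V]}$ interacts with this decomposition. Because taking $[H,V]$-strata is a local construction governed by the slice theorem, $\Sigma$ being a $G$-submanifold disjoint from $\partial M$ with trivial normal bundle implies that $\Sigma_{[H,V]} = \Sigma \cap M_{[H,V]}$ is a codimension-one (in $M_{[H,V]}$) $G$-submanifold of $M_{[H,V]}$, separating it, and that $(M_i)_{[H,V]} = \overline{(M_i \cap M_{[H,V]})}$ gives the two pieces. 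Here one must be slightly careful about the slice type at points of $\Sigma$ versus points just off $\Sigma$: since the normal direction to $\Sigma$ is a trivial $\R$-summand, the non-trivial part $V_x$ of the tangent representation is unchanged as $x$ crosses $\Sigma$, so the stratum genuinely passes through $\Sigma$ rather than being cut by it. This is the analogue of the corresponding non-equivariant statement in \cite{witpaper}, and it is the place where the hypothesis that $G$ acts trivially on the normal $\R$ is used.

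With this in hand, apply the equivariant additivity formula $\chi_G(X \cup Y) = \chi_G(X) + \chi_G(Y) - \chi_G(X \cap Y)$ — or rather its ordinary (non-equivariant) counterpart applied to the manifold $M_{[H,V]}$, which is what $\chi_{[H,V]}$ records. Writing $A = (M_1)_{[H,V]}$ and $B = (M_2)_{[H,V]}$, we get $\chi(M_{[H,V]}) = \chi(A) + \chi(B) - \chi(\Sigma_{[H,V]})$, and likewise $\chi(M'_{[H,V]}) = \chi(A) + \chi(B') - \chi(\Sigma_{[H,V]})$ where $B'$ is $B$ reglued along $\phi|_{\Sigma_{[H,V]}}$; but $\phi$ restricts to an equivariant diffeomorphism of $\Sigma_{[H,V]}$, hence in particular a homeomorphism, so $\chi(B') = \chi(B)$ and $\chi(\Sigma_{[H,V]})$ is the same on both sides. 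Therefore $\chi_{[H,V]}(M) = \chi_{[H,V]}(M')$.

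I expect the main obstacle to be the bookkeeping in the first step: verifying precisely that $M_{[H,V]}$ is a smooth submanifold-with-boundary whose intersection with $\Sigma$ is again a separating submanifold with trivial normal bundle, so that the gluing $M' = M_1 \cup_\phi M_2$ restricts on strata to $M'_{[H,V]} = (M_1)_{[H,V]} \cup_{\phi|} (M_2)_{[H,V]}$. This uses the slice theorem to control the local structure, the fact that $\Sigma$ being $G$-invariant forces its intersection with each stratum to be well-behaved, and a transversality-type argument that $\Sigma$ meets $M_{[H,V]}$ cleanly because the normal bundle of $\Sigma$ is a trivial summand fixed by $G$. Once the geometry is set up correctly, the Euler characteristic computation via additivity is routine. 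An alternative phrasing that sidesteps some of this: observe directly that $M \mapsto M_{[H,V]}$ is compatible with $\GSK$-operations in the sense that a $\GSK$-operation on $M$ induces a $\GSK$-operation (non-equivariantly, or $WH$-equivariantly) on $M_{[H,V]}$, and then invoke the non-equivariant invariance of $\chi$ under $SK$-operations from \cite{SKbook}; but one still needs the same geometric input to justify that induced operation.
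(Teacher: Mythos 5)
Your proposal is correct and rests on the same key geometric input as the paper's proof: the trivial, $G$-fixed normal direction of $\Sigma$ forces each stratum $M_{[H,V]}$ to pass through $\Sigma$ transversally (the paper makes this precise via the equivariant tubular neighbourhood theorem), so the cut restricts to an admissible cut of the stratum. The paper then concludes by observing that the $\GSK$-operation induces a cut-and-paste on $M_{[H,V]}$ — your "alternative phrasing" — whereas your main route finishes by direct inclusion–exclusion; both are fine, and the differences are presentational.
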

\begin{proof} Let $M$ be a $G$-manifold and $\Sigma$ be a $G$-invariant submanifold  of codimension 1 along which we are allowed to cut.
We claim that $\Sigma$ intersects $M_{[H,V]}$ transversally. By definition, $\Sigma$ and $M_{[H,V]}$ intersect transversally if
\[
   T_xM = T_x\Sigma \oplus T_xM_{[H,V]}\qquad \text{for all}\,x\in \Sigma \cap M_{[H,V]}.
\]

By the equivariant tubular neighbourhood theorem (see e.g. \cite[Theorem VI.2.2]{bredon1972introduction}), there exists a neighborhood of $\Sigma$ that is equivariantly diffeomorphic to $\Sigma\times (-\epsilon,\epsilon)$ for some $\epsilon>0$. Since $G$ acts trivially on $(-\epsilon,\epsilon)$, every point in $\{x\}\times (-\epsilon,\epsilon)$ has the same stabilizer group and slice representation as the original point $x$. Hence $\{x\} \times (-\epsilon,\epsilon)\subset M_{[H,V]}$. Therefore, 
\[
T_xM\cong T_x\Sigma\oplus T_x(-\epsilon,\epsilon) \subseteq T_x\Sigma\oplus T_xM_{[H,V]}.
\]
Since $T_x\Sigma\oplus T_xM_{[H,V]}$ is a subspace of $T_xM$, we conclude that $\Sigma$ and $M_{[H,V]}$ are transverse. 

The transversality guarantees that the intersection of the cut, $\Sigma$, with the stratum $M_{[H,V]}$ is a codimension 1 submanifold of $M_{[H,V]}$, which is an allowed equivariant cut of $M_{[H,V]}$. Hence, a $\GSK$-operation on $M$ results in a $\GSK$-operation on $M_{[H,V]}$. Therefore, the slice type Euler characteristic is a $\GSK$-invariant. We note that $M_{[H,V]}$ can be open, but its Euler characteristic is still preserved under cutting and pasting.
\end{proof}

By \cite[Theorem 5.2.1]{kosniowski}, if $G$ is an abelian group of odd order, then the slice type Euler characteristics completely determine the $\GSK$-class of a closed $G$-manifold. Next we provide an example showing that the equivariant Euler characteristic is not enough to distinguish $\GSK$ classes.

\begin{example} \label{Ex: euler char of fixed points is not enough}
    Let $C_p$ be a cyclic group of prime order with $p \geq 5$. It has $\frac{p-1}{2}$ two-dimensional irreducible representations over $\R$. Choose two different 2-dimensional irreducible real representations $V$ and $W$. Consider the following $C_p$-manifolds    
     \[
       M_1:=\RP(\R\times V)\  \text{and}\ 
       M_2:=\RP(\R\times W),
    \]
where $C_p$ acts trivially on $\R$. 

Let us now compute the Euler characteristic of their fixed points. Since $M_1$ and $M_2$ are both diffeomorphic to $\RP^2$, 
	$$\chi(M_0^{\{e\}})=\chi(M_1^{\{e\}}).$$ 
	 Since $V$ is a non-trivial irreducible representation of $C_p$, the only $C_p$-fixed point of $M_1$ is the class $[1:0]$ where $1\in \R$ and $0\in V$. The same is true for $M_2$, and so 
		$$\chi(M_0^{C_p})=1=\chi(M_1^{C_p}).$$ 
Hence, the Euler characteristics of the fixed points of $M_1$ and $M_2$ agree, and thus, their equivariant Euler characteristics agree in the Burnside ring
$$\chi_{C_p}(M_1)= \chi_{C_p}(M_2).$$
The slice type Euler chacteristics, and therefore the $\CpSK$ classes, of $M_1$ and $M_2$ differ. To check this claim, we use the following formula from \cite[p. 199]{kosniowski}. Given any subgroup $K \leq G$ (where $G$ is finite abelian of odd order), and any non-trivial irreducible $K$-representation, $T$, it holds that
$$\chi_{[H, U]}(G\times_K \RP(\R\times T))=\begin{cases} |G/H|,\qquad \text{if} \, H=K, ~U=T;\\
0, \qquad\qquad\, \text{otherwise.}
\end{cases}$$
Since $G\times_G N\cong N$ for any  $G$-manifold $N$ we have
\[
\chi_{[C_p,V]}(M_1)=|C_p/C_p|=1,
\]
\[
\chi_{[C_p,V]}(M_2)=0.
\]
\end{example}

\section{$\SKg$ is a Mackey functor} \label{section: Mackey}

In this section, we show that the equivariant $SK$-groups assemble into a Mackey functor.

\subsection{Mackey functors}    We begin by stating the definition of Mackey functors as certain functors out of the Burnside category. This definition originates with \cite{lindner1976remark}.

   \begin{definition} 
Let $\mathcal{C}$ be a category. Its \textit{span category, $\Span(\mathcal{C})$,} has the same objects as $\mathcal{C}$, and the morphisms, $\Mor(X, Y)$, are equivalence classes of span diagrams $X\leftarrow U \to Y$. Two diagrams $X\leftarrow U \to Y$ and $X\leftarrow U' \to Y$ are equivalent if there is an isomorphism $U \cong U'$ that makes both triangles commute. Composition is given by the pullback of spans.
   \end{definition}

Let $\mathcal{F}_G$ be the category of finite $G$-sets. The hom-sets of $\Span(\mathcal{F}_G)$ are commutative monoids where the sum of $X\leftarrow U \to Y$ and $X\leftarrow V \to Y$ is given by $X\leftarrow U \sqcup V \to Y$, and the additive identity is represented by $X \leftarrow \varnothing \to Y$.

   \begin{definition}
    The \textit{Burnside category, $\mathcal{B}_G$}, is an additive category obtained from $\Span(\mathcal{F}_G)$ by applying group completion to every hom-set.
   \end{definition}

    \begin{definition} \label{def Mackey in terms of spans}
    A \textit{$G$-Mackey functor}  is an additive functor from the Burnside category to the category of abelian groups.
    \end{definition}

   An equivalent axiomatic definition of a $G$-Mackey functor  $\underline{M} \colon \mathcal{B}_G \to \Ab$ consists of the data of an abelian group $\underline{M}(G/H)$ for every subgroup $H$ of $G$ and restriction, transfer, and conjugation homomorphisms
\[
\res^H_K\colon \underline{M}(G/H) \to \underline{M}(G/K),\ \ 
\tr^H_K\colon \underline{M}(G/K) \to \underline{M}(G/H),\ \ 
c_g\colon \underline{M}(G/H) \to \underline{M}(G/gHg^{-1})
\]

% \begin{align*}
% \res^H_K&=\underline{M}(G/H \xleftarrow{\pi} G/K \stackrel{\id}{\to} G/K)\colon \underline{M}(G/H) \to \underline{M}(G/K),\\
% \tr^H_K&=\underline{M}(G/K \xleftarrow{\id} G/K \xrightarrow{\pi} G/H)\colon \underline{M}(G/K) \to \underline{M}(G/H),\\
% c_g&=\underline{M} (G/H \stackrel{\id}{\leftarrow} G/H \xrightarrow{\cong} G/gHg^{-1})\colon \underline{M}(G/H) \to \underline{M}(G/gHg^{-1})
% \end{align*}

\noindent    for all subgroups $K \leq H \leq G$ and elements $g \in G$. These maps are subject to certain compatibility conditions, notably the so-called double coset formula. We refer the reader to \cite{webb2000guide} for the details of the compatibilities in the axiomatic definition, and for the comparison of the two definitions.

The collection of $\SKH$-groups, with $H \leq G$, naturally supports restriction, transfer, and conjugation operations. However, directly checking that they satisfy the axioms in the definition in \cite{webb2000guide} would require a laborious verification of all the compatibility conditions. To avoid this, we will leverage both definitions and their equivalence. We will verify \autoref{def Mackey in terms of spans} and then check that restriction, transfer, and conjugation coincide with the expected formulas. 

\subsection{Singular $SK$-groups} For the purpose of endowing equivariant $SK$-groups with a Mackey functor structure, it will be convenient to adopt a more general notion of equivariant $SK$-groups relative to a $G$-space $X$. We briefly recall the definition below, following \cite[Chapter 1]{SKbook}. 

  \begin{definition} 
  
   Let $X$ be a $G$-space. A \textit{singular $n$-manifold in $X$} is an equivalence class of a pair $(M, f)$, where $M$ is a compact $n$-dimensional $G$-manifold  and $f \colon M \to X$ is a continuous $G$-map; with $(M, f) \sim (M', f')$ if there exists a $G$-diffeomorphism $\varphi\colon M\to M'$ that makes the following triangle commute:
\[
         \begin{tikzcd}
            M \arrow{rr}{\varphi} \arrow{dr}[swap]{f} & & M' \arrow{dl}{f'} \\
            ~ & X. &
         \end{tikzcd}
\]
   Let $\mathcal{M}^{G}(X)$ denote the monoid of singular $n$-manifolds in $X$ under  disjoint union.  We write $\mathcal{M}^{G}$ when $X$ is a point. Throughout this section, the dimension $n$ is fixed and is omitted from the notation. Note that manifolds may have boundaries, but we suppress the boundary symbol $\partial$ for better readability. For instance, $\SKg$ here has the same meaning as $\SKG$ in the preceding section.
  \end{definition}

  \begin{definition} 
  
  We say that the singular $n$-dimensional $G$-manifolds $(M,f)$ and $(M',f')$ are obtained from each other by a \textit{$\GSK$-operation in $X$} if
\begin{enumerate}[(i)]
    \item $M$ has been obtained from $M'$ by a $G$-equivariant cutting and pasting along some admissible submanifold, $\Sigma$, as before in \autoref{def: GSK operation} so $M=M_1 \cup_{\varphi} M_2$ and $M'=M_1 \cup_{\psi} M_2$;
    \item there are $G$-homotopies $f \big|_{M_i} \simeq f' \big|_{M_i}$ for $i=1,2$.
\end{enumerate}
Two $G$-manifolds are called \textit{$\GSK$-equivalent in $X$} if one can be obtained from the other by a finite sequence of $\GSK$-operations in $X$. As before, we denote by $\SKg(X)$ the quotient of the group completion $(\mathcal{M}^{G}(X))^{\grp}$ by the $\GSK$-equivalence relation in $X$. If $X$ is a point, we recover the group $\SKg$.
    \end{definition}
    
    \begin{rem} \label{rem cutting in the same component}
      For $X$ a discrete space, the homotopies must be constant and then the second condition simplifies to $f=f'$. This means we can only cut and paste components that have the same image under the map we are considering.  
    \end{rem}

  \begin{lemma}  
  Let $H$ be a subgroup of $G$. There is a monoid isomorphism
  \[
    \mathcal{M}^{G}(G/H) \cong \mathcal{M}^{H}.
  \]
  \end{lemma}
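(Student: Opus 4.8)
The plan is to construct the isomorphism via the standard induction--restriction adjunction between $G$-spaces and $H$-spaces, applied at the level of singular manifolds. Given a singular $H$-manifold $(N, g\colon N \to \ast)$ — note the target is a point here, but to set up the general pattern I will think of the target as $\ast \cong G/H \, /\! G$ — I would send it to the singular $G$-manifold $(G \times_H N, \;\bar g)$, where $G \times_H N$ is the balanced product (an honest smooth $G$-manifold, since $G$ is finite and acts freely on the $G$-factor) and $\bar g \colon G \times_H N \to G/H$ is the map $[\gamma, x] \mapsto \gamma H$, which is well-defined and $G$-equivariant. Conversely, given a singular $G$-manifold $(M, f \colon M \to G/H)$, I would send it to $(f^{-1}(eH), f|_{f^{-1}(eH)})$; the preimage of the coset $eH$ is an $H$-invariant (since $\mathrm{Stab}_G(eH) = H$) smooth submanifold of $M$, in fact open and closed, so it is itself a compact $H$-manifold, and the restricted map lands in a point.

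The key steps, in order, are: (1) check both assignments are well-defined on $G$-diffeomorphism (resp. $H$-diffeomorphism) classes and send disjoint unions to disjoint unions, so that they are monoid maps; (2) check they are mutually inverse. For (2) in one direction, starting from $(N,g)$ one forms $G \times_H N$ and then takes the preimage of $eH$ under $\bar g$, which is exactly $\{[e,x] : x \in N\} \cong N$ with the original structure map, so the composite is the identity. For the other direction, starting from $(M, f\colon M \to G/H)$, one has $N := f^{-1}(eH)$ and must produce a $G$-diffeomorphism $G \times_H N \xrightarrow{\cong} M$ over $G/H$; this is the map $[\gamma, x] \mapsto \gamma \cdot x$. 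It is well-defined because $f$ is $G$-equivariant (so $\gamma \cdot x \in f^{-1}(\gamma H)$ and the $H$-ambiguity in $[\gamma,x]$ is absorbed by $N$ being $H$-invariant with $H$ fixing $eH$), it is smooth and $G$-equivariant, and it is a bijection because $M = \bigsqcup_{\gamma H \in G/H} f^{-1}(\gamma H)$ and each $\gamma$ carries $f^{-1}(eH)$ diffeomorphically onto $f^{-1}(\gamma H)$ (with $f^{-1}(eH)$ stable under exactly $H$). Its inverse is smooth by the inverse function theorem, or one simply notes both maps are constructed from smooth data and are set-theoretically inverse.

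The main obstacle — really the only place requiring care rather than bookkeeping — is verifying that $f^{-1}(eH)$ is genuinely a compact smooth $H$-manifold (possibly with boundary) and that the decomposition $M = \bigsqcup_{G/H} f^{-1}(\gamma H)$ is a decomposition into open-and-closed pieces. This follows because $G/H$ is discrete, so $f$ is a map to a discrete space, hence locally constant; thus each fiber is open and closed in $M$, inheriting the smooth structure and (since it is $H$-invariant) a smooth $H$-action, and compactness is inherited from $M$ as a closed subspace. Since $M$ is assumed to be a $G$-manifold possibly with boundary, $f^{-1}(eH)$ is an $H$-manifold possibly with boundary, matching the conventions in this section. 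Once this is in place, everything else is formal verification of the monoid-map and mutual-inverse conditions, and I would present those compactly rather than spelling out every diagram.
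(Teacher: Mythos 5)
Your proposal is correct and is essentially identical to the paper's proof: both use $(M,f)\mapsto f^{-1}(eH)$ in one direction and $N\mapsto (G\times_H N \to G/H)$ in the other, with the paper leaving the mutual-inverse check as "straightforward" where you spell it out. The extra details you supply (local constancy of $f$ since $G/H$ is discrete, openness and closedness of the fibers, the explicit diffeomorphism $G\times_H f^{-1}(eH)\cong M$) are all accurate and consistent with what the paper implicitly relies on.
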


  \begin{proof}
Let $f \colon M \to G/H$ represent a singular $n$-manifold in $G/H$. Then $f^{-1}(eH)$ is an $n$-dimensional $H$-manifold, which gives a well-defined element in  $\mathcal{M}^{H}$. Conversely, if $N$ is an $H$-manifold, then we map it to the class of the following singular manifold in $G/H$:
\[
G \times_{H} N \to G/H
\]
\[
(g,x) \mapsto gH.
\]
It is straightforward to check that these maps are inverse to each other and preserve disjoint union, which finishes the proof. 
  \end{proof}

Combined with   \autoref{rem cutting in the same component}, the previous lemma implies a parallel statement for the $SK$-groups.

\begin{cor} \label{relative SK identification}
 Let $H$ be a subgroup of $G$. Then $\SKg \left(G/H\right) \cong \SKH.$ 
\end{cor}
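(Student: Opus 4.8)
The plan is to upgrade the monoid isomorphism $\mathcal{M}^{G}(G/H) \cong \mathcal{M}^{H}$ of the previous lemma to an isomorphism of the quotient groups $\SKg(G/H) \cong \SKH$, so the main task is to check that the bijection and its inverse are compatible with the two $\GSK$-equivalence relations (the relative one in $G/H$ on the left, and the absolute one on the right). First I would pass from the monoid isomorphism to an isomorphism of the group completions $(\mathcal{M}^{G}(G/H))^{\grp} \cong (\mathcal{M}^{H})^{\grp}$, which is automatic since group completion is functorial. It then remains to show that under this iso the subgroup generated by differences $[(M,f)] - [(M',f')]$ coming from $\GSK$-operations in $G/H$ on the left corresponds exactly to the subgroup generated by $[N] - [N']$ coming from $\GSK$-operations on the right; equivalently, that the two equivalence relations match up under the bijection.

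The key observation making this work is \autoref{rem cutting in the same component}: because $G/H$ is a discrete $G$-space, a $\GSK$-operation in $G/H$ on $(M,f)$ requires the structure maps to $G/H$ to literally agree on the two pieces (the homotopies are constant), so cutting along $\Sigma$ and repasting can only take place within the preimage of a single point-orbit, and the structure map is unchanged. I would argue as follows. Given a $\GSK$-operation in $G/H$ taking $(M,f)$ to $(M',f')$ with $M = M_1 \cup_\varphi M_2$, $M' = M_1 \cup_\psi M_2$, $f$ and $f'$ agreeing on each $M_i$: restrict everything to the orbit $eH$, i.e. apply $(-)^{-1}(eH)$. Since $eH$ is an open-closed point in $G/H$, the admissible cutting submanifold $\Sigma$ is carried to an admissible $H$-invariant codimension-one submanifold $\Sigma \cap f^{-1}(eH) = \Sigma \cap f'^{-1}(eH)$ of $f^{-1}(eH)$ with the required trivialized normal bundle, and the pasting diffeomorphisms $\varphi,\psi$ restrict to $H$-equivariant diffeomorphisms of this submanifold. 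Hence $f^{-1}(eH)$ and $f'^{-1}(eH)$ differ by an $H$-equivariant $\GSK$-operation, so they represent the same class in $\SKH$. Conversely, given an $H$-equivariant $\GSK$-operation on $H$-manifolds $N \rightsquigarrow N'$ along $\Sigma_N$, applying $G \times_H (-)$ produces a $\GSK$-operation on $G \times_H N \rightsquigarrow G \times_H N'$ along $G \times_H \Sigma_N$, and the structure maps to $G/H$ (sending $(g,x) \mapsto gH$) are unaffected and hence tautologically agree on the two pieces, so this is a $\GSK$-operation in $G/H$. These two constructions being mutually inverse on the level of the monoid isomorphism, they descend to mutually inverse isomorphisms on the quotient groups $\SKg(G/H) \cong \SKH$.

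I expect the main (though modest) obstacle to be the careful bookkeeping around admissible cuts: one must confirm that the normal-bundle triviality condition with trivial $G$-action on the $\R$-factor translates correctly under both $(-)^{-1}(eH)$ and $G \times_H(-)$, and that ``admissible'' submanifolds (disjoint from the boundary, separating) are preserved in both directions — in particular that $G \times_H \Sigma_N$ really is disjoint from $\partial(G\times_H N)$ and separates when $\Sigma_N$ does. None of this is deep, since the slice/tubular-neighborhood considerations are local and $G\times_H(-)$ is an equivalence of categories from $H$-manifolds onto the full subcategory of $G$-manifolds with orbit type $G/H$, but it should be spelled out so that the reader is convinced the relation on each side is neither coarser nor finer than the other. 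Once this correspondence of relations is in hand, the isomorphism of quotient groups is immediate, and \autoref{relative SK identification} follows.
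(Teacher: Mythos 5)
Your proposal is correct and is essentially the paper's argument spelled out in full: the paper simply cites the monoid isomorphism $\mathcal{M}^{G}(G/H)\cong\mathcal{M}^{H}$ together with \autoref{rem cutting in the same component} (constant homotopies over the discrete space $G/H$ force cuts to respect the fibers), which is exactly the mechanism you use to match the two $\GSK$-relations under $(-)^{-1}(eH)$ and $G\times_H(-)$. Your write-up is more detailed than the paper's one-line proof, but there is no difference in approach.
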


\subsection{Mackey functor structure} Next, we are going to upgrade equivariant $SK$-groups to a functor from the Burnside category and check that it is a Mackey functor in the sense of  \autoref{def Mackey in terms of spans}. 

\begin{construction}\label{Mackeyconstruction}
The map on objects
\[
\SKg \colon \mathcal{B}_G \to \Ab
\]
that sends a finite $G$-set $X$ to a group $\SKg(X)$ can be extended to a covariant functor as follows. Given a $G$-equivariant span $X \xleftarrow{\alpha} U \xrightarrow{\beta} Y$ in $\mathcal{B}_G$, we define a map 
  \[
   \mathcal{M}^{G}(X) \to \mathcal{M}^{G}(Y)
  \]
   \[
   (M, f) \mapsto (\widetilde{M}, \beta \circ \widetilde{f}),
   \]
where $(\widetilde{M}, \widetilde{f})$ is defined as the pullback of $(M, f)$ along $\alpha$ endowed with the obvious $G$-action
     \begin{center}
         \begin{tikzcd}
            M \arrow{d}[swap]{f} & \widetilde{M} \arrow[dl, phantom, "\llcorner", very near start]  \arrow[d, dashed]{}{{\widetilde{f}}} \arrow[l, swap, dashed]{}{\widetilde{\alpha}} \\
            X & U \arrow{l}{\alpha} \arrow{r}{\beta} & Y.
         \end{tikzcd}
     \end{center}
This map respects the disjoint union and hence, is a monoid map. Since the $SK$-operation can only be performed on the manifold components mapping to the same point the defined map also respects the $SK$-relation and we obtain the induced map on the $SK$-groups
  \[
    (\alpha, \beta) \colon \SKg(X) \to \SKg(Y).
    \]

    \end{construction}
    
    %In other words, the manifold components over different points of the discrete base space $X$ are not interacting under the $SK$-relation.     
   \begin{prop}\label{Mackeyprop} 
      The map $\SKg \colon \mathcal{B}_G \to \Ab$ of \autoref{Mackeyconstruction} is a $G$-Mackey functor. 
    \end{prop}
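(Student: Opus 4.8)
The plan is to verify \autoref{def Mackey in terms of spans} directly: that \autoref{Mackeyconstruction} defines an \emph{additive functor} $\SKg\colon\mathcal{B}_G\to\Ab$. Four things must be checked: (a) the assignment on morphisms depends only on the equivalence class of a span; (b) it is functorial with respect to composition of spans; (c) it sends identity spans to identity homomorphisms; and (d) it is additive on hom-monoids, so that after group completion it descends to a functor on $\mathcal{B}_G$. The geometric input used throughout is that for a map of finite $G$-sets $\alpha\colon U\to X$ and a singular manifold $(M,f)$ over the discrete space $X$, the projection $\widetilde\alpha\colon\widetilde M\to M$ from the pullback is a $G$-equivariant local diffeomorphism: one has $\widetilde M\cong\coprod_{u\in U}f^{-1}(\alpha(u))$ with $G$ permuting the summands, so every piece of data entering a $\GSK$-operation pulls back along $\widetilde\alpha$.

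For (a): an isomorphism of spans $\iota\colon U\xrightarrow{\cong}U'$ over $X$ and $Y$ induces a canonical $G$-diffeomorphism between the pullback of $(M,f)$ along $\alpha$ and along $\alpha'=\alpha\circ\iota^{-1}$, compatible with $\beta$ and $\beta'=\beta\circ\iota^{-1}$; hence the two spans induce the same element of $\mathcal{M}^G(Y)$ and the same map on $\SKg$. That the induced map is a monoid homomorphism respecting the $\GSK$-relation is already recorded in \autoref{Mackeyconstruction}; the point to make explicit is that an admissible cut $\Sigma$ of $M$ lying in a single fibre of $f$ pulls back to $\widetilde\Sigma=\widetilde\alpha^{-1}(\Sigma)\subseteq\widetilde M$ which is again admissible — codimension one, $G$-invariant, disjoint from $\partial\widetilde M=\widetilde\alpha^{-1}(\partial M)$, with normal bundle $\widetilde\Sigma\times\R$ carrying the trivial $G$-action, and separating on each summand — and whose two pieces over the summand $f^{-1}(\alpha(u))$ both map under $\beta\circ\widetilde f$ to the single point $\beta(u)\in Y$; thus we obtain a genuine $\GSK$-operation in $Y$ in the sense of \autoref{rem cutting in the same component}.

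For (b) and (c): the identity span $X\xleftarrow{\id}X\xrightarrow{\id}X$ pulls $(M,f)$ back to itself and pushes it forward unchanged, so induces the identity on $\SKg(X)$. For composition, given $X\xleftarrow{\alpha}U\xrightarrow{\beta}Y$ and $Y\xleftarrow{\gamma}V\xrightarrow{\delta}Z$ with composite span $X\leftarrow W\to Z$ where $W=U\times_Y V$ and the legs are $\alpha\circ\mathrm{pr}_U$ and $\delta\circ\mathrm{pr}_V$: applying the first induced map and then the second replaces $(M,f)$ by its pullback along $\alpha$ and then along $\gamma$, and by the pasting lemma for pullback squares this iterated pullback is canonically identified with the pullback of $(M,f)$ along $\alpha\circ\mathrm{pr}_U\colon W\to X$ — both are described by $\{(m,u,v)\mid f(m)=\alpha(u),\ \beta(u)=\gamma(v)\}$ — while the two bookkeeping maps to $Z$ agree. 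Hence the composite of the induced maps is the map induced by the composite span. For (d), the sum of $X\xleftarrow{\alpha}U\xrightarrow{\beta}Y$ and $X\xleftarrow{\alpha'}U'\xrightarrow{\beta'}Y$ is $X\leftarrow U\sqcup U'\to Y$, and the pullback of $(M,f)$ along $U\sqcup U'\to X$ is the disjoint union of its pullbacks along $U$ and along $U'$; since disjoint union is the monoid operation on $\mathcal{M}^G(Y)$, hence addition in $\SKg(Y)$, we get $(\alpha\sqcup\alpha',\,\beta\sqcup\beta')=(\alpha,\beta)+(\alpha',\beta')$, and the zero span $X\leftarrow\varnothing\to Y$ induces $0$. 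So each map $\Hom_{\Span(\mathcal{F}_G)}(X,Y)\to\Hom_{\Ab}\bigl(\SKg(X),\SKg(Y)\bigr)$ is a monoid homomorphism into an abelian group, hence factors uniquely through the group completion $\Hom_{\mathcal{B}_G}(X,Y)$; together with (b) and (c) this yields the additive functor $\SKg\colon\mathcal{B}_G\to\Ab$, i.e.\ a $G$-Mackey functor.

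I expect the main obstacle to be the geometric verification inside (a): checking that the pulled-back submanifold $\widetilde\Sigma$ meets all the admissibility conditions and that the fibrewise-cutting constraint of \autoref{rem cutting in the same component} is preserved under pullback along $\widetilde\alpha$. Once $\widetilde\alpha$ is recognized as a $G$-equivariant local diffeomorphism this is routine, and the remaining steps (b)–(d) are then formal consequences of the universal property of pullbacks together with the fact that each $\SKg(Y)$ is already an abelian group.
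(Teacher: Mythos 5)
Your proposal is correct and follows essentially the same route as the paper: functoriality of $\SKg$ on spans via the universal property (pasting lemma) of pullbacks, and additivity via the fact that pullback along a disjoint union is the disjoint union of pullbacks, with the compatibility of the induced maps with the $\GSK$-relation handled exactly as in \autoref{Mackeyconstruction} and \autoref{rem cutting in the same component}. The paper's proof is terser (it delegates well-definedness and the geometric verification to the construction itself), whereas you spell out the local-diffeomorphism description of $\widetilde{M}\cong\coprod_{u\in U}f^{-1}(\alpha(u))$ explicitly; this is a welcome elaboration, not a different argument.
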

\begin{proof}
Functoriality follows from the universal property of the pullback and a diagram chase. For additivity, consider two morphisms $X \xleftarrow{\alpha} U \xrightarrow{\beta} Y$ and $X \xleftarrow{\alpha'} U' \xrightarrow{\beta'} Y$ in $\mathcal{B}_G$ and the corresponding map induced by their sum 
   \begin{center}
         \begin{tikzcd}
            M \arrow{d}[swap]{f} & \widetilde{M} \sqcup \widetilde{M'} \arrow[dl, phantom, "\llcorner", very near start]  \arrow[d, dashed]{}{\widetilde{f} \sqcup \widetilde{f'}} \arrow[l, swap, dashed]{}{\widetilde{\alpha} \sqcup \widetilde{\alpha}'} \\
            X & U \sqcup U' \arrow{l}{\alpha \sqcup \alpha'} \arrow{r}{\beta \sqcup \beta'} & Y.
         \end{tikzcd}
     \end{center}
Since the pullback along the disjoint union of maps is a disjoint union of pullbacks, 
     \[[\widetilde{M},\beta\circ\widetilde{f}]\sqcup[\widetilde{M'},\beta'\circ\widetilde{f'}]=[\widetilde{M}\sqcup\widetilde{M'},(\beta\sqcup\beta')\circ(\widetilde{f}\sqcup\widetilde{f'})]\]
    in $\SKg(Y)$ and hence $(\alpha \sqcup \alpha', \beta \sqcup \beta')$ is a sum of the maps $(\alpha, \beta)$ and $(\alpha', \beta'),$ as desired. 
\end{proof}

The collection of groups $\{ \SKH,~  H \leq G \}$ has geometrically defined restriction, induction and conjugation homomorphisms: restriction of an action to a subgroup, extension of an action to a bigger group by taking the balanced product, and transporting an action to an isomorphic group.  By translating our Mackey functor structure into the axiomatic definition of \cite{webb2000guide}, via the identification $\SKg(G/H)\cong \SKH$, we recover the geometric description of the Mackey functor described above.

\begin{prop}\label{prop:geom}
The restriction, $\res^{H}_{K}$, transfer, $\tr^{H}_{K}$, and conjugation, $c_g$, homomorphisms arising from the Mackey functor structure on the groups $\SKH$ with $H \leq G$ are consistent with the natural restriction, induction, and conjugation homomorphisms described above.
\end{prop}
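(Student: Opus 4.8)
The plan is to evaluate the functor $\SKg\colon\mathcal{B}_G\to\Ab$ of \autoref{Mackeyconstruction} on the three standard families of spans which, under the equivalence between \autoref{def Mackey in terms of spans} and the axiomatic definition, represent restriction, transfer, and conjugation (see \cite{webb2000guide}), and then to transport the outcome across the isomorphism $\SKg(G/H)\cong\SKH$ of \autoref{relative SK identification}, recalling that this isomorphism sends the class of $(M,f)$ to the $H$-manifold $f^{-1}(eH)$ with its residual $H$-action. Explicitly, for $K\leq H\leq G$ with projection $\pi\colon G/K\to G/H$, the map $\res^{H}_{K}$ is induced by the span $G/H\xleftarrow{\pi}G/K\xrightarrow{\id}G/K$ and $\tr^{H}_{K}$ by the span $G/K\xleftarrow{\id}G/K\xrightarrow{\pi}G/H$; for $g\in G$, writing $R_g\colon G/gHg^{-1}\to G/H$ for the canonical $G$-isomorphism $g'(gHg^{-1})\mapsto g'gH$, the map $c_g$ is induced by the span $G/H\xleftarrow{R_g}G/gHg^{-1}\xrightarrow{\id}G/gHg^{-1}$. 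One may alternatively take these spans as the definitions of $\res^{H}_{K}$, $\tr^{H}_{K}$, $c_g$ in the span model.

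I would then trace each span through \autoref{Mackeyconstruction}. For restriction, given $f\colon M\to G/H$, the pullback $\widetilde M=M\times_{G/H}G/K$ carries the projection $\widetilde f$ to $G/K$, and $\widetilde f^{-1}(eK)$ is canonically $f^{-1}(eH)$ regarded only as a $K$-manifold, so the induced map is restriction of the action to $K$. For transfer, pulling back along $\id$ does nothing, so the span sends $(M,f)$ with $f\colon M\to G/K$ to $(M,\pi\circ f)$; then $(\pi\circ f)^{-1}(eH)=f^{-1}\bigl(\pi^{-1}(eH)\bigr)=f^{-1}(H/K)$, and applying the monoid isomorphism $\mathcal{M}^{H}(H/K)\cong\mathcal{M}^{K}$ (the lemma preceding \autoref{relative SK identification}, with $H$ in place of $G$) identifies this $H$-manifold over $H/K$ with $H\times_K f^{-1}(eK)$; hence the induced map is the geometric induction $N\mapsto H\times_K N$. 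For conjugation, pulling $f\colon M\to G/H$ back along the isomorphism $R_g$ is a relabeling, and the fiber of the pulled-back map over $e(gHg^{-1})$ is $f^{-1}(gH)$; multiplication by $g$ is an equivariant diffeomorphism $f^{-1}(eH)\simar f^{-1}(gH)$ intertwining the $H$-action with the $gHg^{-1}$-action along the conjugation isomorphism $h\mapsto ghg^{-1}$, so the induced map is transport of the action along conjugation by $g$. In each case this is exactly the geometric operation in the statement.

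The only real content is the bookkeeping: correctly identifying the fibers of the several pullbacks, keeping track of which residual subgroup acts on each, and checking that the identifications invoked are the canonical ones built in the proof of the preceding lemma, so that all the relevant squares commute on the nose. There is no conceptual obstacle. The single point that I would cite rather than argue is the dictionary fixing which spans represent $\res^{H}_{K}$, $\tr^{H}_{K}$, and $c_g$ in passing between the two definitions of Mackey functor; once that is pinned down, each of the three verifications is a direct diagram chase.
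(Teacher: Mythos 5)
Your proposal is correct and follows essentially the same route as the paper: evaluate the Mackey functor $\SKg$ on the standard spans for restriction, transfer, and conjugation and transport the result through the identification $\SKg(G/H)\cong\SKH$ of \autoref{relative SK identification}. The only cosmetic divergences are that you phrase the conjugation span as $G/H\xleftarrow{R_g}G/gHg^{-1}\xrightarrow{\id}G/gHg^{-1}$ (equivalent to the paper's $G/H\xleftarrow{\id}G/H\xrightarrow{\cong}G/gHg^{-1}$ via $R_g^{-1}$) and that you spell out the transfer step $(\pi\circ f)^{-1}(eH)=f^{-1}(H/K)\cong H\times_K f^{-1}(eK)$ slightly more explicitly than the paper does.
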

\begin{proof}   

Let $H$ and $K$ be subgroups of $G$ with $K\leq H$. Let $\pi \colon G/K\to G/H$ be the  equivariant map sending $eK$ to $eH$.
\subsubsection*{Restriction}The restriction map, $\res^H_K$, is obtained by applying the Mackey functor $\SKg$ to the span
\[
G/H \xleftarrow{\pi} G/K \xrightarrow[]{\id} G/K.
\] 

Let $[f \colon M \to G/H]$ be an element in $\SKg(G/H)$. By \autoref{Mackeyconstruction} of the functor $\SKg$ it will be mapped to $[\widetilde{f} \colon \widetilde{M} \to G/K]$ given by the pullback
\[
         \begin{tikzcd}
            M \arrow{d}[swap]{f} & \widetilde{M} \arrow[dl, phantom, "\llcorner", very near start] \arrow[d, dashed]{}{{\widetilde{f}}} \arrow[l, swap, dashed]{}{\widetilde{\pi}} \\
            G/H & G/K \arrow{l}{\pi}.
         \end{tikzcd}
\]
Under the identification of \autoref{relative SK identification}, this corresponds to mapping a class of an $H$-manifold $f^{-1}(eH) \in \SKH$ to the class of a $K$-manifold $\widetilde{f}^{-1}(eK) \in \SKk$. Since the diagram is a pullback,  $\widetilde{f}^{-1}(eK) \cong f^{-1}(eH)$ and the map $\SKH \to \SKk$ is given by the restriction of the $H$-action to a $K$-action.

\subsubsection*{Transfer} To get the transfer, $\tr^{H}_{K}$, we apply our Mackey functor to the span 
\[
G/K \xleftarrow{\id} G/K \xrightarrow{\pi} G/H.
\]
An element $[f \colon M \to G/K] \in \SKg(G/K)$ will be mapped to $[\pi \circ f \colon M \to G/H] \in \SKg(G/H)$. Under the identification of \autoref{relative SK identification}, this corresponds to mapping the class of $N \coloneqq f^{-1}(eK) \in SK^K$  to $f^{-1}(eH) \cong H \times_{K} N \in \SKH.$ Hence, the transfer map on the $SK$-groups is given by the induction map $[N]\mapsto [H\times_{K}N]$.
\subsubsection*{Conjugation.}  To obtain conjugation, $c_g$, we apply our Mackey functor to the span 
\[
G/H \xleftarrow{\id} G/H \xrightarrow{\cong} G/gHg^{-1}.
\] 
Using the identification of \autoref{relative SK identification}, we get a map $\SKH \to SK^{gHg^{-1}}$ given by sending an $H$-manifold to a $gHg^{-1}$-manifold via the identification $H\cong gHg^{-1}$.\end{proof}

\section{Scissors congruence $K$-theory of $G$-manifolds} \label{section :equivariant squares spectrum}

In this section, we use $K$-theory of squares categories to define a squares $K$-theory spectrum for the category of smooth compact $G$-manifolds with boundary, and smooth $G$-maps for a finite group $G$. This recovers the construction of \cite{witpaper} in the case $G=e$. The arguments  from \cite{witpaper} hold almost identically in the presence of equivariance, and we will often refer the reader to that paper for details of proofs.

\subsection{Review of $K$-theory for squares categories} We recall the definition of squares categories and the construction of its $K$-theory spectrum from \cite{CKMZsquarescategories}.

\begin{definition}\label{def:squarescat} A \emph{simple double category} is a small double category whose 2-cells are uniquely determined by their boundaries. Concretely, a simple double category $\calC$ consists of:
	\begin{itemize}
		\item \emph{Horizontal and Vertical Categories.} A pair of categories $(\Hc,\Vc )$ with the same objects as $\calC$. We call $\Hc$ the \emph{horizontal category} and $\Vc$ the \emph{vertical category} of $\calC$.  Morphisms of $\Hc$ are denoted $\rtail$, morphisms of $\Vc$  are denoted $\hookrightarrow$.
		\item \emph{Distinguished Squares.} A collection of square diagrams
		\[\dsquare{A}{B}{C}{D}\]
		that  are required to be closed under horizontal and vertical composition. We also require that for any $f\colon A\rtail B$, and any $g\colon A\htail B$, the squares
		\[	\begin{tikzcd}
		A \ar[r,>->,"f"] \ar[dr,phantom,"\square"] \ar[d,hook,swap,"="] & B \ar[d,hook,"="] \\
		A \ar[r,>->,"f"]& B
		\end{tikzcd} \qquad  \text{and} \qquad	\begin{tikzcd}
		A \ar[r,>->,"="] \ar[dr,phantom,"\square"] \ar[d,hook,swap,"g"]& A \ar[d,hook,"g"] \\
		B \ar[r, >->,"="]& B
		\end{tikzcd}\]
		must be distinguished. 

	\end{itemize}
A functor $F \colon \calC \to \calC'$ of simple double categories is a pair of functors $F_{\Hc} \colon \Hc \to \Hc'$ and $F_{\Vc} \colon \Vc \to \Vc'$ which agree on objects and preserve distinguished squares.
\end{definition}
\begin{definition} A \emph{squares category} is a simple double category $\calC=(\Hc,\Vc)$ with a chosen basepoint $O$ that is initial in both $\Hc$ and $\Vc$. A functor of squares categories is a functor of
simple double categories which preserves the basepoint.
\end{definition}

We will now describe the construction of the $K$-theory spectrum associated to a squares category $\C$. As usual, write $[k]$ for the category $0 \to 1 \to \cdots \to k$.  Let $T_k\C=\mathbf{hFun}([k],\C)$ be the category whose objects are the horizontal functors $[k]\to\Hc$ (that is, horizontal maps $C_0\rightarrowtail C_1\rightarrowtail\cdots\rightarrowtail C_k$) and morphisms the vertical distinguished transformations. A vertical distinguished transformation between horizontal functors $F,G\colon [k]\to\Hc$ is a choice of a vertical morphism $F(i)\hookrightarrow G(i)$ for every $0\leq i\leq k$ such that the square
\[\dsquare{F(i)}{F(i+1)}{G(i)}{G(i+1)}\]
is distinguished for every $0\leq i\leq k-1$. These assemble into a simplicial category, $T_\bullet \C$, and the $K$-theory space is defined as 
 \[K^\square(\C) = \Omega_O |N_\bullet T_\bullet\C|,\]
  where $\Omega_O$ is the based loop space based at the object $O \in N_0
  T_0\C$.  By \cite[Theorem 2.5]{CKMZsquarescategories}, $K^\square(\C)$ is an infinite loop space (we will abuse notation when referring to the associated $\Omega$-spectrum).  We recall the computation of $\pi_0$ of this spectrum, which shows that this construction produces a four term relation for each distinguished square. 
\begin{theorem}[{{\cite[Theorem 3.1]{CKMZsquarescategories}}}]\label{thm:PresK0} Let $\calC$ be a squares category satisfying the condition
	\begin{itemize}
		\item[($\star$)] For all objects $A,B\in\calC$, there exists some object $X$ and distinguished squares 
		\[\dsquare{O}{A}{B}{X}\qquad \text{and} \qquad \dsquare{O}{B}{A}{X}.\]
	\end{itemize}
Then 
$$K_0^\square(\calC)\cong \Z(\ob\calC)/\sim$$
where $\sim$ is the relation $[O]=0$ and for every distinguished square
\[\dsquare{A}{B}{C}{D}\qquad \text{we have}\quad [A]+[D]=[B]+[C].\]	
\end{theorem}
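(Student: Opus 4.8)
The plan is to identify $\pi_0 K^\square(\calC)$ with the fundamental group $\pi_1(|N_\bullet T_\bullet \calC|, O)$ of the realization of the bisimplicial set $(p,q)\mapsto N_q T_p\calC$, and then to read off a presentation of this $\pi_1$ from the cells of dimension $\le 2$, using the edge-path (van Kampen) presentation of the fundamental group of a (bi)simplicial set.

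First I would unwind the low-degree cells. The $0$-cells $N_0 T_0\calC$ are the objects of $\calC$. The $1$-cells are of two kinds: $N_0 T_1\calC = \ob(T_1\calC)$, the horizontal morphisms $A\rtail B$, and $N_1 T_0\calC = \mathrm{Mor}(\Vc)$ (note $T_0\calC=\Vc$), the vertical morphisms $A\htail B$; the degeneracies give the identities. The $2$-cells occur in bidegrees $(2,0)$, $(1,1)$, $(0,2)$: bidegree $(2,0)$, $N_0 T_2\calC$, the composable horizontal pairs $A\rtail B\rtail C$, each a triangle forcing composition in $\Hc$; bidegree $(0,2)$, $N_2\Vc$, composable vertical pairs, forcing composition in $\Vc$; and bidegree $(1,1)$, $N_1 T_1\calC = \mathrm{Mor}(T_1\calC)$, the distinguished squares, each realized as a copy of $\Delta^1\times\Delta^1$ whose four boundary $1$-cells are exactly the four sides of that distinguished square --- horizontal edges $f\colon A\rtail B$, $h\colon C\rtail D$ and vertical edges $g\colon A\htail C$, $k\colon B\htail D$ --- so that the relation it imposes is that the boundary loop vanishes, i.e. $\overline k\,\overline f = \overline h\,\overline g$. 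Hence $\pi_1(|N_\bullet T_\bullet\calC|, O)$ has a presentation with one generator for each morphism of $\calC$ (horizontal or vertical), subject to: identities are the unit; $\overline{f'\circ f}=\overline{f'}\,\overline f$ and $\overline{g'\circ g}=\overline{g'}\,\overline g$ for composable horizontal (resp.\ vertical) pairs; and $\overline k\,\overline f=\overline h\,\overline g$ for each distinguished square.

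Next I would collapse this presentation using that $O$ is initial in both $\Hc$ and $\Vc$. For each object $A$ there are unique morphisms $\iota_A\colon O\rtail A$ and $\jmath_A\colon O\htail A$; the composition relations for $O\rtail A\rtail B$ and $O\htail A\htail B$ give $\overline f = \overline{\iota_B}\,\overline{\iota_A}^{-1}$ and $\overline g = \overline{\jmath_B}\,\overline{\jmath_A}^{-1}$, and since the vertical edges $\{\jmath_A\}$ span a tree in the $1$-skeleton, the loops $[A]:=\overline{\jmath_A}^{-1}\,\overline{\iota_A}$ at $O$ generate $\pi_1$, with $[O]=0$. Substituting these expressions into the square relation $\overline k\,\overline f=\overline h\,\overline g$ and cancelling the $\overline{\jmath_\bullet}$-factors turns it into exactly $[A]+[D]=[B]+[C]$ (we may write additively since $\pi_1$ is abelian, $K^\square(\calC)$ being an infinite loop space). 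This produces a surjection $\Z(\ob\calC)/\!\sim\ \twoheadrightarrow \pi_0 K^\square(\calC)$; for the converse one checks that sending each generator $\overline f$ or $\overline g$ to $[\,\mathrm{target}\,]-[\,\mathrm{source}\,]$ defines a homomorphism $\pi_1\to\Z(\ob\calC)/\!\sim$ that splits it. This last point is where hypothesis $(\star)$ is invoked: it guarantees that the monoid of objects is rich enough --- for any $A,B$ there is an object $X$ with $[X]=[A]+[B]$, built from the distinguished squares out of $O$ supplied by $(\star)$ --- so that the assignment is well defined on all relations and identifies loop concatenation with addition.

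The main obstacle I anticipate is the careful bookkeeping in the middle step: describing the realization of the bisimplicial set precisely enough to see that the distinguished-square cells give exactly the four-term relation and nothing more (getting the orientations and the placement of the four sides of $\Delta^1\times\Delta^1$ right), together with the use of $(\star)$ in the final step to rule out relations beyond $[O]=0$ and the four-term relations --- equivalently, injectivity of $\Z(\ob\calC)/\!\sim\ \to\pi_0 K^\square(\calC)$. Recognizing $\pi_0$ of the loop space as $\pi_1$ of the base, and the formal manipulations inside the edge-path group, are routine.
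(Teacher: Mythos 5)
First, a caveat on the comparison: the paper does not prove this statement at all --- it is quoted verbatim from \cite{CKMZsquarescategories} --- so there is no in-paper argument to measure you against. On its own terms, your strategy is the right one: identifying $\pi_0 K^\square(\calC)$ with $\pi_1(|N_\bullet T_\bullet\calC|,O)$, reading off an edge-path presentation from the cells of total degree at most $2$ (your inventory of the bidegree $(1,0)$, $(0,1)$, $(2,0)$, $(1,1)$, $(0,2)$ cells and of the relation each imposes is correct), and then collapsing the presentation using initiality of $O$ to the generators $[A]=\overline{\jmath_A}^{\,-1}\overline{\iota_A}$.

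There are, however, two concrete problems in your final step. (1) The splitting homomorphism you propose, sending \emph{every} edge to $[\mathrm{target}]-[\mathrm{source}]$, is the coboundary of the $0$-cochain $A\mapsto[A]$ and therefore induces the \emph{zero} map on $\pi_1$: it sends the tree edges $\jmath_A$ to $[A]\neq 0$, so it does not even descend to the edge-path group relative to your spanning tree, and the composite $\Z(\ob\calC)/\!\sim\;\to\pi_1\to\Z(\ob\calC)/\!\sim$ sends $[A]\mapsto([A]-[O])-([A]-[O])=0$ rather than $[A]$. The assignment must be asymmetric in the two directions: send a horizontal $f\colon A\rtail B$ to $[B]-[A]$ and a vertical $g\colon A\htail B$ to $0$. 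Then tree edges die, vertical and horizontal composition relations are respected, and the square relation $\overline k\,\overline f=\overline h\,\overline g$ maps to $[B]-[A]=[D]-[C]$, which is precisely the four-term relation; the composite is now the identity. (2) You have misplaced the role of $(\star)$. The corrected splitting is well defined without it; what $(\star)$ actually supplies is commutativity of the a priori non-abelian edge-path group: its two distinguished squares yield $[X]=[A][B]$ and $[X]=[B][A]$, hence $[A][B]=[B][A]$ for all objects, so the group presented by your generators and relations is already abelian and therefore \emph{equals} $\Z(\ob\calC)/\!\sim$ (making the separate surjection-plus-splitting step unnecessary). Deriving abelianness instead from the infinite-loop-space structure is risky: a squares category with two non-basepoint objects and only the mandatory morphisms and squares has $\pi_1(|N_\bullet T_\bullet\calC|,O)$ free of rank $2$, so whatever hypotheses make $K^\square(\calC)$ an infinite loop space cannot be weaker than something like $(\star)$, and you should not invoke that structure as if it were unconditional.
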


\subsection{The squares category of equivariant manifolds with boundary}
Let $\Mfld$ be the category of smooth compact $G$-manifolds with boundary, and smooth $G$-maps. We will endow this category with a squares category structure. We start with a preliminary definition of the kind of embeddings that we will allow as our horizontal and vertical morphisms. Basically, these are the kind of embeddings that include a piece in a $G$-$SK$ operation.

\begin{definition}
    An \emph{equivariant $SK$-embedding} $N\to M$ between $G$-manifolds is a smooth $G$-embedding $f\colon N\to M$ such that each connected component of $\partial N$ is either mapped entirely onto a boundary component of $M$ or entirely into the interior of $M$. 
\end{definition}

Next, we equip $\Mfld$ with a squares category structure which captures the equivariant cut and paste relation. 

\begin{definition}\label{def:Mfldn} 
Let $\Mfld$ be the category of smooth compact $G$-manifolds with boundary, and smooth $G$-maps. We define a squares category structure as follows:
\begin{itemize}
	\item Define the horizontal and vertical categories $h\Mfld$ and $v\Mfld$ to have the same objects as $\Mfld$ and morphisms the $G$-$SK$-embeddings.
	\item The \emph{distinguished squares} in $\Mfld$ are commutative squares 
	\[\dsquare{N}{M}{M'}{M\cup_N M'}\]
	that are pushout squares in $\Mfld$.
	\item The \emph{basepoint object} is the empty $G$-manifold, denoted $\emptyset$.
		\end{itemize}	 
\end{definition}

\begin{example}\label{squareexamples} We note that in particular we have the following distinguished squares in $\Mfld$: 
	\begin{enumerate}
		\item \emph{Diffeomorphism squares.} Suppose $\phi\colon M\to M'$ is a $G$-diffeomorphism. Then
		\[\dsquaref{\emptyset}{M}{\emptyset}{M'}{}{}{}{\phi}\]
		is a distinguished square.
		\item \emph{Coproduct squares.} Given $G$-manifolds
		$M$ and $M'$, 
		\[	\dsquare{\emptyset}{M}{M'}{M\sqcup M'}\qquad \text{and} \qquad	\dsquare{\emptyset}{M'}{M}{M\sqcup M'}\] 
		are distinguished squares, where $M\sqcup M'$ is the disjoint union of $G$-manifolds. %obtained by pushout along $\emptyset$.
	\end{enumerate}	
\end{example}

\begin{prop}\label{claim:Mfldn} $\Mfld$ with the structure defined in \autoref{def:Mfldn} is a squares category satisfying condition ($\star$) of \autoref{thm:PresK0}.
\end{prop}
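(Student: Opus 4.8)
The proof proceeds in two parts: first verifying that $\Mfld$ satisfies the axioms of a squares category from \autoref{def:squarescat}, then checking condition $(\star)$. For the squares category axioms, the main points are: (a) the horizontal and vertical categories $h\Mfld$ and $v\Mfld$ are well-defined categories with the same objects as $\Mfld$ --- this amounts to checking that $G$-$SK$-embeddings are closed under composition (the condition on boundary components of $\partial N$ being mapped either entirely onto a boundary component of $M$ or entirely into the interior is easily seen to be composable) and contain identities; (b) the distinguished squares, defined as pushout squares along $G$-$SK$-embeddings, are closed under horizontal and vertical composition --- this follows from the standard pasting lemma for pushouts, once one checks that the relevant pushouts exist in $\Mfld$ and that the induced maps are again $G$-$SK$-embeddings; (c) the two families of ``trivial'' squares (with identities on one pair of parallel sides) are distinguished, i.e.\ are pushouts --- these are degenerate pushouts and the check is immediate. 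I would also need to observe that the basepoint $\emptyset$ is initial in both $h\Mfld$ and $v\Mfld$, which is clear since the empty manifold admits a unique $G$-$SK$-embedding into any $G$-manifold.

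\textbf{Existence of pushouts.} The technical heart is establishing that a pushout $M \cup_N M'$ exists in $\Mfld$ when $N \rtail M$ and $N \htail M'$ are $G$-$SK$-embeddings. I would argue as in \cite{witpaper}, adapted equivariantly: given $N \hookrightarrow M$ an $SK$-embedding, a collar neighborhood theorem (equivariant version, e.g.\ \cite{bredon1972introduction}) provides a $G$-invariant collar of the image, which lets one glue $M$ and $M'$ along $N$ to produce a smooth compact $G$-manifold with boundary, with a smooth $G$-action; the $SK$-embedding hypothesis on both legs is exactly what guarantees the glued object is again a manifold with boundary (boundary components are either fully identified or fully retained) rather than something with corners or mixed boundary behavior. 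One then checks the universal property against smooth $G$-maps. Since the construction is entirely local near $N$ and $G$ acts by diffeomorphisms, the equivariant case requires no essentially new idea beyond keeping track of $G$-invariance of the collars, which is why the authors say the arguments of \cite{witpaper} ``hold almost identically.''

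\textbf{Condition $(\star)$.} Given $G$-manifolds $A, B$, I take $X = A \sqcup B$. The two required distinguished squares are precisely the coproduct squares of \autoref{squareexamples}(2): $\emptyset \rtail A$, $\emptyset \htail B$, $A \htail A \sqcup B$, $B \rtail A \sqcup B$ forming a pushout, and symmetrically. These are pushouts along the initial object, hence distinguished, so $(\star)$ holds.

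\textbf{Main obstacle.} The step I expect to require the most care is (b)/(the existence of pushouts): one must ensure that the pushout along a $G$-$SK$-embedding stays inside $\Mfld$ (smooth, compact, with boundary, $G$-manifold) and that composites of such pushout squares are again pushout squares along $G$-$SK$-embeddings --- in particular that the ``new'' horizontal and vertical maps appearing in a composite square are themselves $G$-$SK$-embeddings. This is where the precise formulation of the $SK$-embedding condition matters, and where I would lean most directly on the corresponding verification in \cite{witpaper}, merely inserting ``$G$-invariant'' before ``collar,'' ``tubular neighborhood,'' and ``diffeomorphism'' throughout. Everything else is a routine diagram chase or an appeal to \autoref{thm:PresK0}.
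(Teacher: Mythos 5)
Your proposal is correct and matches the paper's approach: the paper simply defers the axiom-checking to the non-equivariant case in \cite{witpaper} (with equivariant collars replacing ordinary ones, exactly as you describe) and verifies $(\star)$ via the coproduct squares of \autoref{squareexamples}, which is your $X = A \sqcup B$. The only minor over-engineering is your ``existence of pushouts'' step: since distinguished squares are \emph{defined} as those commutative squares of $G$-$SK$-embeddings that happen to be pushouts, the axioms only need closure under pasting and the disjoint-union squares, not general existence of pushouts along arbitrary pairs of $SK$-embeddings.
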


Checking the axioms is straightforward and analogous to the non-equivariant case \cite[Proposition 4.3]{witpaper}, so we do not repeat it here. Condition $(\star)$ is satisfied by the coproduct squares described in \autoref{squareexamples}.

\begin{theorem}\label{thm:PresSK} There is a group isomorphism
	\[\Kso{\Mfld}\cong \SKG.\]
\end{theorem}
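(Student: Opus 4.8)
\textbf{Proof plan for \autoref{thm:PresSK}.}
The strategy is to apply \autoref{thm:PresK0} to the squares category $\Mfld$, which by \autoref{claim:Mfldn} satisfies condition $(\star)$, so that $\Kso{\Mfld}$ has the explicit presentation $\Z(\ob\Mfld)/\!\sim$, where $\sim$ is generated by $[\emptyset]=0$ together with the four-term relation $[N]+[M\cup_N M']=[M]+[M']$ coming from each pushout square. One then builds mutually inverse homomorphisms between this group and $\SKG$. The map $\Phi\colon \Kso{\Mfld}\to\SKG$ is induced by sending a generator $[M]$ to the class of $M$ in $\SKG$; I would check this is well-defined by verifying that each of the defining relations of $\Kso{\Mfld}$ holds in $\SKG$. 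The relation $[\emptyset]=0$ is immediate. For the pushout relation, observe that if $M\cup_N M'$ is a pushout along $G$-$SK$-embeddings of $N$ into each of $M$ and $M'$, then the additivity of the Euler-characteristic-type invariants is not what is needed---rather, I would argue directly at the level of the $\GSK$-relation: cutting $M\cup_N M'$ along the copy of $N$ (which is an admissible codimension-1 $G$-submanifold disjoint from the boundary by the definition of $G$-$SK$-embedding) and pasting back along the identity recovers $M\sqcup M'$ up to $G$-diffeomorphism, so $[M\cup_N M'] = [M]+[M']$ in $\SKG$, giving the four-term relation since $[N]$ is added on both sides only when $N$ itself is closed; more care is needed when $N$ has boundary, and here one uses that the boundary of $N$ maps into $\partial M$ and $\partial M'$, so that $N$ appears on the ``seam'' and the cut-and-paste bookkeeping still yields the claimed identity. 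This well-definedness check is where I expect the genuine content to lie.

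For the inverse, I would define $\Psi\colon\SKG\to\Kso{\Mfld}$ starting from the monoid $\mathcal{M}_n^{G,\partial}$: send a $G$-diffeomorphism class $[M]$ to the class $[M]\in\Kso{\Mfld}$. The coproduct squares of \autoref{squareexamples} give $[M\sqcup M']=[M]+[M']$ in $\Kso{\Mfld}$ (apply the four-term relation to the pushout square with $N=\emptyset$), so this is a monoid homomorphism $\mathcal{M}_n^{G,\partial}\to\Kso{\Mfld}$ and hence extends to the group completion $(\mathcal{M}_n^{G,\partial})^{\grp}$. To descend to $\SKG$, I must check that a single $\GSK$-operation does not change the class in $\Kso{\Mfld}$: if $M = M_1\cup_\phi M_2$ and $M' = M_1\cup_\psi M_2$ are related by a $\GSK$-operation along $\Sigma$, then applying the pushout four-term relation to each of the two pushout squares $\Sigma\to M_i\to M$ (resp.\ $M'$), with the gluing data $\phi$ (resp.\ $\psi$), yields $[M] = [M_1]+[M_2]-[\Sigma] = [M']$ in $\Kso{\Mfld}$, where the diffeomorphism squares of \autoref{squareexamples} are used to see that the two different gluings $\phi,\psi$ give the same class. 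This is the equivariant analogue of the argument in \cite[Proposition 4.3 / Theorem 4.5]{witpaper}.

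Finally, I would verify $\Phi\circ\Psi = \id$ and $\Psi\circ\Phi=\id$, both of which are immediate on generators by construction: $\Phi\Psi[M] = [M]$ in $\SKG$ and $\Psi\Phi[M]=[M]$ in $\Kso{\Mfld}$, and since both groups are generated by the classes of $G$-manifolds this suffices. The main obstacle, as indicated above, is the careful treatment of the pushout/four-term relation in the presence of boundary: one must be sure that every pushout $M\cup_N M'$ along $G$-$SK$-embeddings is realized by an honest $\GSK$-operation (leaving the original boundary untouched and pasting back all cut boundary components), and conversely that the cutting submanifold $\Sigma$ in a $\GSK$-operation, together with the two pieces, always forms a distinguished (pushout) square in $\Mfld$. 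Since the paper explicitly says the arguments ``hold almost identically'' to \cite{witpaper}, I would structure the proof to highlight exactly the two or three places where equivariance enters---namely that pushouts along $G$-$SK$-embeddings are computed underlying-space-wise and inherit the $G$-action, and that admissible cuts $\Sigma$ are precisely the $G$-invariant codimension-1 submanifolds with trivial (equivariantly split) normal bundle disjoint from $\partial M$---and otherwise defer to the non-equivariant template.
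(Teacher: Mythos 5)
Your overall architecture (two mutually inverse homomorphisms, checking the four-term relation in $\SKG$ and the $\GSK$-relation in $\Kso{\Mfld}$, deferring the rest to \cite{witpaper}) is the same as the paper's, which simply imports the proof of \cite[Theorem 4.4]{witpaper} with two equivariant caveats. However, both of your key verification steps contain a dimension confusion that would make the argument fail as written. In a distinguished square of $\Mfld$, the object $N$ is an $n$-dimensional $G$-manifold and $N\rightarrowtail M$ is a codimension-$0$ $SK$-embedding; $N$ is \emph{not} ``an admissible codimension-1 $G$-submanifold,'' and one cannot ``cut $M\cup_N M'$ along the copy of $N$.'' The correct argument cuts $M\cup_N M'$ along the components of $\partial N$ that lie in the interiors of $M$ and $M'$ (these are the codimension-$1$ loci), using an equivariant bicollar to see that this is an admissible cut, and concludes $N\sqcup(M\cup_N M')\sim_{SK} M\sqcup M'$, i.e.\ the honest four-term relation $[N]+[M\cup_N M']=[M]+[M']$. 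Your derivation of ``$[M\cup_N M']=[M]+[M']$'' with $[N]$ appearing ``only when $N$ is closed'' is not the right relation and the caveat about $N$ having boundary does not repair it.

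Symmetrically, in the other direction your formula $[M]=[M_1]+[M_2]-[\Sigma]$ is not meaningful in $\Kso{\Mfld}$: the cutting locus $\Sigma$ is $(n-1)$-dimensional, hence not an object of $\Mfld$, and $\Sigma\to M_i$ is not a horizontal or vertical morphism. The missing idea is to thicken: take an equivariant bicollar $\Sigma\times[-1,1]$ (an $n$-dimensional object), form the distinguished square with $N=\Sigma\times[-1,1]$ glued into collar-extended copies of $M_1$ and $M_2$, and use diffeomorphism squares to identify the collar-extended pieces with $M_1$ and $M_2$; this gives $[M]=[M_1]+[M_2]-[\Sigma\times[-1,1]]=[M']$. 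This thickening step is exactly where the first of the paper's two equivariant caveats enters --- the existence of \emph{equivariant} collars (\cite[Theorem 21.2]{conner1979differentiable}, \cite{equivcollar}) --- and your proposal omits it; the second caveat, that closures of complements of images of equivariant $SK$-embeddings are $G$-invariant submanifolds (needed to decompose $M$ as $N$ glued to the closure of $M\setminus N$ in the first direction), is also absent. With these corrections your outline would align with the paper's intended proof.
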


We refer the reader to the proof of \cite[Theorem 4.4]{witpaper}, which holds here using the description from \autoref{thm:PresK0}, with only two small caveats. When showing the relations in $\Kso{\Mfld}$ imply those in $\SKG$, the authors of \cite{witpaper} used squares which glue manifolds together along collared neighborhoods of the boundary. Now we need to use equivariant collars (see e.g. \cite[Theorem 21.2]{conner1979differentiable} or \cite{equivcollar}). When showing the relations in $\SKG$ imply those in $\Kso{\Mfld}$, the authors of \cite{witpaper} used closures of complements of $SK$-embeddings. We note that for equivariant embeddings, complements of the image of the embeddings are $G$-invariant subspaces, and thus, their closures are $G$-invariant submanifolds. Combining these two observations with the proof of \cite[Theorem 4.4]{witpaper} gives the desired result. 

\begin{rem} The standing hypothesis of this paper is that $G$ is a finite group, but in fact \autoref{thm:PresSK} holds for any $G$ with an equivariant collar theorem, for example, compact Lie groups. However, a genuine $G$-spectrum with fixed points given by $K^\square(\Mfld)$ is only expected for finite $G$: lifting the Mackey functor structure on the groups $SK^H$ to the level of spectra requires constructing a spectral Mackey functor based on squares categories. Spectral Mackey functors are known to model a $G$-spectra when $G$ is a finite group, and while spectral Mackey functors based on categorical Mackey functors of symmetric monoidal categories or Waldhausen categories have been used in recent applications \cite{BO, clark, MMEquivariantATheory, MMhcob}, a framework of Mackey functors of squares categories was only very recently worked out in \cite{CalleChan}, where the authors construct the $G$-spectrum whose existence we conjectured for finite groups $G$.
\end{rem}

\subsection{Map to equivariant $A$-theory} 

Let $G$ be a finite group and let $X$ be a $G$-space. Denote by $\mathcal{R}^G(X)$ the Waldhausen category of $G$-retractive spaces over $X$ which are dominated by finite relative $G$-CW complexes with $G$-equivariant maps over and under $X$ as morphisms. Weak equivalences and cofibrations are given by $G$-homotopy equivalences and $G$-cofibrations. The authors of \cite{MMEquivariantATheory} show that the spectra $K(\mathcal{R}^H(X))$, for varying subgroups $H\leq G$, assemble into a spectral Mackey functor, which corresponds to a genuine $G$-spectrum, called the equivariant $A$-theory of $X$. Here $K$ is the $K$-theory associated to the $S_\bullet$-construction of \cite{waldhausen}. 

The fixed points of the equivariant $A$-theory are
$ A_G(X)^H \simeq K(\mathcal{R}^H(X)).$  By \cite{badzioch2017fixed}, the fixed points have a tom Dieck style splitting, and when $X=\ast$ this specializes to
\begin{equation}\label{tomDieck}A_G(*)^H \simeq \displaystyle\prod_{(J) \in C_H} A(BWJ),
\end{equation} 
where $WJ$ is the Weyl group of $J$ in $H$ and the product runs over conjugacy classes of subgroups $J$ of $H$.

Since $\mathcal{R}^H(*)$ is a Waldhausen category, it can be endowed with a squares structure by \cite[Proposition 2.10]{CKMZsquarescategories}, such that
\[
K(\mathcal{R}^H(*)) \simeq K^\square(\mathcal{R}^H(*)^\square).
\]
Explicitly, the squares structure on the category $\mathcal{R}^H(*)^\square$ is given as follows. The basepoint is $*$. The horizontal maps are the cofibrations in $\mathcal{R}^H(*)$, in essence, $H$-equivariant  basepoint preserving maps $X\to Y$ such that $X^J \to Y^J$ is a cofibration for each $J \leq H$. The vertical maps are any morphisms in $\mathcal{R}^H(*)$. The distinguished squares are given by the homotopy pushouts, namely those squares
\[
    \begin{tikzcd}
    A \arrow[r, tail] \arrow[d] & B \arrow[d]\\
    C \arrow[r,tail]& D
    \end{tikzcd}
\]  
for which the map  $(B \cup_A C)^J\to D^J$ is a weak equivalence for all $J\leq H$. The following lemma is immediate since the horizontal morphisms in $\MfldH$ are cofibrations and distinguished squares in $\MfldH$ are pushout squares.

\begin{lem}\label{alpha}
    The map 
\[
\MfldH \rightarrow \mathcal{R}^H(*)^\square
\]
\[
M \mapsto M_+ \coloneqq M \sqcup *,
\]
is a map of squares categories. Hence, it induces a map of spectra
\[
\alpha_H \colon K^{\square}(\MfldH)\rightarrow K^\square(\mathcal{R}^H(*)^\square) \simeq A_G(*)^H.
\]
\end{lem}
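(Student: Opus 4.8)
The plan is to verify that the assignment $M \mapsto M_+ = M \sqcup *$ defines a functor of squares categories $\MfldH \to \mathcal{R}^H(*)^\square$, and then invoke functoriality of squares $K$-theory together with the identification $K^\square(\mathcal{R}^H(*)^\square) \simeq K(\mathcal{R}^H(*)) \simeq A_G(*)^H$ recalled just above. So the only real content is checking the three compatibility requirements in \autoref{def:squarescat}: that the assignment sends horizontal maps to horizontal maps, vertical maps to vertical maps, and distinguished squares to distinguished squares, and that it preserves the basepoint.

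First I would observe that $M_+$ is indeed an object of $\mathcal{R}^H(*)$: a compact $H$-manifold with boundary is homotopy equivalent to a finite $H$-CW complex by \cite{illman}, so $M_+$, with basepoint the disjoint added point, is a retractive space over $*$ dominated by a finite $H$-CW complex. The basepoint $\emptyset \in \MfldH$ is sent to $\emptyset_+ = *$, which is the basepoint of $\mathcal{R}^H(*)^\square$, so the basepoint is preserved. Next, both the horizontal and vertical categories of $\MfldH$ have morphisms the equivariant $SK$-embeddings, and an $H$-embedding $f \colon N \to M$ induces $f_+ \colon N_+ \to M_+$; this is an $H$-equivariant based map, and on $J$-fixed points for each $J \leq H$ it is an embedding of manifolds, hence in particular a cofibration. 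So $f_+$ is a cofibration in $\mathcal{R}^H(*)$, which lands in the horizontal category of $\mathcal{R}^H(*)^\square$; a fortiori it also lands in the vertical category, whose morphisms are arbitrary maps. Thus horizontal maps go to horizontal maps and vertical maps go to vertical maps.

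The main point to check is that distinguished squares are preserved. A distinguished square in $\MfldH$ is a pushout square
\[\dsquare{N}{M}{M'}{M\cup_N M'}\]
in $\Mfld$ along $SK$-embeddings. Applying $(-)_+$ and using that $(-)_+$ preserves pushouts (the added basepoint is a disjoint summand, so $(M \cup_N M')_+ = M_+ \cup_{N_+} M'_+$ in pointed spaces), we obtain a pushout square of pointed $H$-spaces. To conclude it is distinguished in $\mathcal{R}^H(*)^\square$, I need the canonical map $(M_+ \cup_{N_+} M'_+)^J \to (M \cup_N M' )_+{}^J$ to be a weak equivalence for each $J \leq H$ — but here it is even an equality, since fixed points commute with pushouts along closed inclusions and with the functor $(-)_+$, and the left vertical leg $N_+ \to M'_+$ is a cofibration on each $J$-fixed set. (Equivalently: one leg is a cofibration, so the ordinary pushout computes the homotopy pushout on every fixed set.) This is essentially the remark made right before the statement, that ``the horizontal morphisms in $\MfldH$ are cofibrations and distinguished squares in $\MfldH$ are pushout squares'', and it is the step where a little care about equivariant collars / the structure of $SK$-embeddings is needed to know the gluing $M \cup_N M'$ is again a smooth $H$-manifold and the square is a genuine pushout — but that has already been handled in setting up \autoref{def:Mfldn} and \autoref{claim:Mfldn}.

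Having verified that $M \mapsto M_+$ is a functor of squares categories, the induced map on $K$-theory spectra $K^\square(\MfldH) \to K^\square(\mathcal{R}^H(*)^\square)$ exists by the functoriality of the construction $K^\square(-)$ recalled from \cite{CKMZsquarescategories}, and composing with the equivalence $K^\square(\mathcal{R}^H(*)^\square) \simeq K(\mathcal{R}^H(*)) \simeq A_G(*)^H$ gives $\alpha_H$. I do not expect any serious obstacle: the statement is labelled ``immediate'' for good reason, and the only substantive checks — preservation of the basepoint, of horizontal and vertical morphisms, and of pushout squares (including compatibility with fixed points) — are all routine consequences of the explicit description of the squares structure on $\mathcal{R}^H(*)^\square$ given above and of the fact that $(-)_+$ is a left adjoint hence preserves colimits. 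The closest thing to a subtlety is making sure that $SK$-embeddings are cofibrations on all fixed sets, which follows because an equivariant embedding restricts to an embedding of manifolds (with the boundary condition in the definition of $SK$-embedding ensuring the relevant CW-pair structure) on each fixed locus.
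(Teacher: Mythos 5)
Your proposal is correct and follows exactly the route the paper takes: the paper dismisses the lemma as immediate ``since the horizontal morphisms in $\MfldH$ are cofibrations and distinguished squares in $\MfldH$ are pushout squares,'' and your write-up is simply a careful expansion of that one-line justification (basepoint, horizontal/vertical morphisms, pushouts become homotopy pushouts because one leg is a cofibration on all fixed sets). No discrepancy with the paper's argument.
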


We trace this map on $\pi_0$ through the tom Dieck style splitting of the fixed points of the equivariant $A$-theory from \autoref{tomDieck}. For this, we need an explicit description of the map 
  \[
 t_J \colon A_G(*)^H \xrightarrow[]{\simeq} \prod_{(J) \in C_H} A(BWJ) \xrightarrow[]{pr_J} A(BWJ)
  \]
  constructed in \cite{badzioch2017fixed}.
  %Let us establish some notation first. For a subgroup $J \leq H$ we denote by $X^{(J)}$ the union of fixed points of $X$ over all subgroups of $H$ conjugate to $J.$ For $J, K$ subgroups of $H$ we will write $(J) \leq (K)$ if $J$ is conjugate to a subgroup in $K$.

\begin{lemma}\label{tj}
Let $C_H$ be the set of conjugacy classes of subgroups of $H$. For each $(J) \in C_H$, the map induced from the tom Dieck style splitting 
\[
t_J \colon A_G(*)^H\to A(BWJ)
\]
is induced by an exact functor of Waldhausen categories 
\[
\mathcal{R}^H(*) \to \mathcal{R}(BWJ)
\]
 that sends $X$, a retractive $H$-space over a point, to $(X^{J}/\cup_{K \gneq J} X^{K})_{hWJ}$, a retractive space over $BWJ$. 
\end{lemma}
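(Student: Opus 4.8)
The plan is to unwind the construction of the tom Dieck style splitting of \cite{badzioch2017fixed} and show that the explicit formula $\Phi_J\colon X\mapsto (X^J/\bigcup_{K\gneq J}X^K)_{hWJ}$ is precisely a point-set model for the projection $t_J$. Recall that the splitting $A_G(*)^H\simeq\prod_{(J)\in C_H}A(BWJ)$ comes from filtering the Waldhausen category $\mathcal{R}^H(*)$ by isotropy: ordering the conjugacy classes of subgroups of $H$, one considers the full Waldhausen subcategories of retractive $H$-spaces which are built (up to $H$-homotopy) only from cells of orbit type $H/L$ with a conjugate of $J$ subconjugate to $L$, resp. \emph{strictly} subconjugate, and applies the additivity and fibration theorems of \cite{waldhausen} to the resulting filtration quotients. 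The $(J)$-summand $A(BWJ)$ is identified as the $K$-theory of the subquotient $\mathcal{R}^H(*)_{\le J}/\mathcal{R}^H(*)_{<J}$, via the exact functor that takes $J$-fixed points (a $WJ$-space since $WJ=N_H(J)/J$ acts), collapses the subcomplex of higher fixed points to a point, and then forms the Borel construction over $BWJ$; at the level of cells this sends an orbit-type-$H/J$ cell to a free $WJ$-cell, and the Borel construction turns that into a cell over $BWJ$.

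First I would check that the same formula $\Phi_J$ defines an exact functor $\mathcal{R}^H(*)\to\mathcal{R}(BWJ)$ on the whole category, not merely on the subquotient. This factors as three functors: (i) $(-)^J$, from $\mathcal{R}^H(*)$ to retractive $WJ$-spaces over a point, which is exact because $J$-fixed points carry $H$-cofibrations to $WJ$-cofibrations, commute with pushouts along $G$-cofibrations, and preserve $H$-homotopy equivalences; (ii) the quotient $Z\mapsto Z/\bigcup_{K\gneq J}Z^K$, which is exact because $\bigcup_{K\gneq J}Z^K$ is a $WJ$-subcomplex of $Z$ and the inclusion is a cofibration, so the construction preserves cofibrations, pushouts, and weak equivalences; and (iii) the Borel construction $(-)_{hWJ}=EWJ_+\wedge_{WJ}(-)$, which is exact and lands in retractive spaces over $BWJ=EWJ/WJ$. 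I would also verify the finiteness condition: if $X$ is dominated by a finite relative $H$-CW complex over a point, then a count of the orbit-type-$H/J$ cells shows $\Phi_J(X)$ is dominated by a finite relative CW complex over $BWJ$, so $\Phi_J$ indeed takes values in $\mathcal{R}(BWJ)$.

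Next I would show that $\Phi_J$ is compatible with the filtration: it sends the lower-isotropy subcategory $\mathcal{R}^H(*)_{<J}$ into the trivial Waldhausen category (objects with no orbit-type-$(J)$ cells have contractible, or rather trivially based, image), so $\Phi_J$ factors through the localization $\mathcal{R}^H(*)\to\mathcal{R}^H(*)_{\le J}/\mathcal{R}^H(*)_{<J}$; and the induced functor on the subquotient is exactly the equivalence used in \cite{badzioch2017fixed} to identify the $(J)$-summand with $\mathcal{R}(BWJ)$. Combining this with the first step, on $K$-theory $\Phi_J$ induces the composite of the equivalence $A_G(*)^H\xrightarrow{\simeq}\prod_{(J')\in C_H}A(BWJ')$ with the projection $pr_J$ to the $(J)$-factor, which is the defining description of $t_J$.

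The main obstacle is the last matching step: the splitting in \cite{badzioch2017fixed} is produced by an inductive fibration-sequence argument, and the projection $t_J$ there need not be literally realized by a single exact functor, so one has to verify that $\Phi_J$ is genuinely compatible with the filtration (it annihilates $\mathcal{R}^H(*)_{<J}$ and agrees, up to natural weak equivalence, with the identifying equivalence on the associated graded) and that the resulting map of spectra is compatible with the structure maps of the splitting. A secondary, more routine subtlety is making sure the three functors in the middle step are exact in the actual point-set model of $\mathcal{R}^H(*)$ — in particular that collapsing $\bigcup_{K\gneq J}(-)^K$ preserves weak equivalences, which uses that the relevant fixed-point inclusions are cofibrations so that the quotient is homotopy invariant.
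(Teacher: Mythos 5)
Your proposal is correct and follows essentially the same route as the paper: both unwind the tom Dieck style splitting of \cite{badzioch2017fixed} into a composite of exact functors (passage to $J$-fixed points, collapse of higher isotropy, Borel construction over $BWJ$) and identify the composite with $X\mapsto (X^{J}/\cup_{K \gneq J} X^{K})_{hWJ}$. The paper simply cites the four explicit exact functors constructed in \cite{badzioch2017fixed} and traces $X$ through them, so the filtration-compatibility check you flag as the main obstacle is already packaged in the cited propositions rather than re-verified.
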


\begin{proof}
 We will not reproduce the full proof from \cite{badzioch2017fixed}, but we will guide the reader on how to reconstruct the desired map. As before, let $\mathcal{R}^H(*)$ be the category of finitely dominated retractive $H$-spaces over a point. Given a subgroup $J \leq H$, let $\mathcal{R}_J^H(*)$ denote the full subcategory of $\mathcal{R}^H(*)$ consisting of those retractive $H$-spaces $Y$ such that the stabilizer of any point in $Y\backslash \{\ast\}$ is a conjugate of $J$.
% The map, $t_J$, is the following composition of maps:
%  \[
% A_G(*)^H \xrightarrow[]{\textrm{I}} K(\mathcal{R}_J^H(*)) \xrightarrow[]{\textrm{II}} K(\mathcal{R}_{\{e\}}^{WJ}(*)) \xrightarrow[]{\textrm{III}} K(\mathcal{R}^{WJ}(EWJ \times *)) \xrightarrow[]{\textrm{IV}}  A(BWJ).
%  \]
 The map $t_J$ is obtained by applying $K$-theory to the following composition of exact functors constructed in the proofs of the cited propositions:
  \[
\mathcal{R}^H(*) \xrightarrow[\text{\cite[2.1]{badzioch2017fixed}}]{\textrm{I}} \mathcal{R}_J^H(*)\xrightarrow[\text{\cite[2.2]{badzioch2017fixed}}]{\textrm{II}} \mathcal{R}_{\{e\}}^{WJ}(*) \xrightarrow[\text{\cite[2.3]{badzioch2017fixed}}]{\textrm{III}} \mathcal{R}^{WJ}(EWJ \times *) \xrightarrow[\text{\cite[2.4]{badzioch2017fixed}}]{\textrm{IV}} \mathcal{R}(BWJ).
 \]

\noindent We recall the definitions of these functors. Map I is not constructed explicitly, but proceeds by induction. Order the set of conjugacy classes of subgroups of $H$
   \[
C_H=\{(e)=(H_0), (H_1), \cdots, (H_n)=(H) \}
   \]
in such a way that if $H_i$ is conjugate to a subgroup of $H_j$, which we denote by $(H_i) \leq (H_j)$, then $i \leq j$.
Denote by $\mathcal{R}^H_{\leq i}$ the full subcategory  of $\mathcal{R}^H(\ast)$ consisting of those $H$-spaces $Y$ such that the stabilizer of any point in $Y\backslash \{\ast\}$ is a conjugate of a subgroup of $H_i$. In \cite[Proposition 2.1]{badzioch2017fixed}, the authors prove that there is a weak equivalence 
\[
K(\mathcal{R}^H_{\leq i}) \xrightarrow[]{\simeq} K(\mathcal{R}^H_{\leq i-1}) \times K(\mathcal{R}^H_{H_i}(\ast))  
\]
induced by the following functor:
\[
\mathcal{R}^H_{\leq i} \to \mathcal{R}^H_{\leq i-1} \times \mathcal{R}^H_{H_i}(\ast)
\]
\[
Y \mapsto (Y/Y^{(H_i)}, Y^{(H_i)}). 
\]
The notation $Y^{(H_i)}$ means the union of fixed points of $Y$ over all subgroups of $H$ conjugate to $H_i.$ This way we ensure that it is indeed an $H$-invariant subspace.

Let $X \in \mathcal{R}^H(\ast)=\mathcal{R}^H_{\leq n}$. To determine the image of $X$ in $\mathcal{R}^H_J(\ast)$ for a subgroup $J=H_j$, descend from $\mathcal{R}^H_{\leq n}$ to $\mathcal{R}^H_{\leq j}$ via 
\[
\mathcal{R}^H_{\leq n} \to \mathcal{R}^H_{\leq n-1} \to \mathcal{R}^H_{\leq n-2} \to \ldots \to \mathcal{R}^H_{\leq j}
\]
\[
X \mapsto X/X^{(H_n)} \mapsto X/(X^{(H_n)} \cup X^{(H_{n-1})}) \mapsto \ldots \mapsto X/(X^{(H_n)} \cup X^{(H_{n-1})} \cup \ldots \cup X^{(H_{j+1})}). 
\]
Here we use that $ (Z/A) \Big/  (Z/A)^{(H_l)} =Z /(A \cup Z^{(H_l)})$ for any $H$-invariant subspace $A \subseteq Z$.
Finally, we map to the $\mathcal{R}^H_{J}(\ast)$ component by sending $X/(X^{(H_n)} \cup X^{(H_{n-1})} \cup \ldots \cup X^{(H_{j+1})})$ to its $(J)$-fixed points, which can be rewritten as
\[
X^{(J)}/\cup_{(K) \gneq (J)} X^{(K)}.
\]

\noindent The exact functor $\mathcal{R}_J^H(*) \xrightarrow{\mathrm{II}} \mathcal{R}_{\{e\}}^{WJ}(*)$ sends a retractive space $Y \in \mathcal{R}_J^H(*)$ to its fixed points subspace $Y^J \in \mathcal{R}_{\{e\}}^{WJ}(*)$. 

\noindent The exact functor $\mathcal{R}_{\{e\}}^{WJ}(*) \xrightarrow{\mathrm{III}} \mathcal{R}^{WJ}(EWJ \times *)$ sends a retractive space $Y$ to $EWJ \times Y$. 

\noindent The exact functor 
 $\mathcal{R}^{WJ}(EWJ \times *) \xrightarrow{\mathrm{IV}} \mathcal{R}(EWJ/WJ)$ sends $Y$ to  $Y/WJ$. 
 
 Let $X \in \mathcal{R}^{H}(*)$ be a retractive $H$-space. Note that under the composite of the four maps labeled \textrm{I}, \textrm{II}, \textrm{III}, \textrm{IV}, $X$ gets mapped to $(X^{J}/\cup_{K \gneq J} X^{K})_{hWJ}$, as desired.\end{proof}

In \cite{witpaper}, the map $K^\square(\Mnfld_n^\d)\to A(\ast)$ was followed by the linearization map in order to define a map
$$K^\square(\Mnfld_n^\d)\to K(\Z),$$ which on $\pi_0$ is the map $SK_n^\d\to \Z$, that sends a manifold $M$ to its Euler characteristic.

Recall that the linearization map $A(X) \xrightarrow[]{l} K(\mathbb{Z}[\pi_1(X)])$ is induced by the functor 
\[
\mathcal{R}(X) \to  \ChPerf(\mathbb{Z}[\pi_1(X)])
\]
to the category of perfect chain complexes, that sends $Y$, a retractive space over $X$, to $C_*(\widetilde{Y}, \widetilde{X})$. Here $\widetilde{X}$ denotes the universal cover of $X$ and $\widetilde{Y}$ is the pullback of $\widetilde{X}$ along the retraction $Y \to X$. Note that $\widetilde{Y}$ is a retractive space over $\widetilde{X}$. 

\begin{lemma}\label{linEuler}
On $\pi_0$, the composition of linearization and the map induced by augmentation, 
\[A_0(X) \xrightarrow{l}  K_0(\Z[\pi_1(X)]) \xrightarrow{\aug} K_0(\Z) \cong \Z
\]
sends the class of $Y$ to $\chi(Y,X).$
\end{lemma}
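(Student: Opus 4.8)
The plan is to unwind the definition of the linearization functor and of the augmentation map on $\pi_0$, and to identify the resulting alternating sum of ranks with the relative Euler characteristic $\chi(Y, X)$. First I would recall that $\pi_0$ of the $K$-theory of perfect chain complexes over a ring $R$ is $K_0(R)$, and that the class of a bounded chain complex $C_*$ of finitely generated projectives is $\sum_i (-1)^i [C_i]$. Applying this to $R = \Z[\pi_1(X)]$ and the complex $C_*(\widetilde Y, \widetilde X)$ — the relative cellular (or singular) chain complex of the pullback $\widetilde Y \to \widetilde X$ of the universal cover along the retraction — the image of $[Y]$ under $l$ is $\sum_i (-1)^i [C_i(\widetilde Y, \widetilde X)]$ in $K_0(\Z[\pi_1(X)])$.

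Next I would analyze the augmentation map. The augmentation $\Z[\pi_1(X)] \to \Z$ induces $K_0(\Z[\pi_1(X)]) \to K_0(\Z) \cong \Z$, which on a finitely generated free (or projective) module sends it to its $\Z$-rank after base change, i.e. for a free module $\Z[\pi_1(X)]^{\oplus m}$ it records $m$. Now $C_i(\widetilde Y, \widetilde X)$ is free over $\Z[\pi_1(X)]$ on the set of relative $i$-cells of $Y$ over $X$ (choosing a CW structure on the pair $(Y, X)$, which exists since $Y$ is finitely dominated — one should pass to a finite CW model up to the relation in $K_0$, or argue with the finiteness obstruction, but on $\pi_0$ this is harmless). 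Hence $\aug(l([Y])) = \sum_i (-1)^i (\text{number of relative } i\text{-cells of } (Y,X))$, which is by definition $\chi(Y, X)$.

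The one genuinely delicate point — and the step I expect to be the main obstacle — is handling finite domination rather than strict finiteness: a retractive space $Y \in \mathcal{R}(X)$ is only assumed dominated by a finite relative CW complex, so $C_*(\widetilde Y, \widetilde X)$ is a perfect complex (a retract of a bounded complex of finitely generated frees) but need not itself be finite-dimensional or free in each degree. I would address this by noting that on $K_0$ the class of a perfect complex still has a well-defined Euler characteristic in $K_0(\Z[\pi_1(X)])$, and after augmentation this lands in $K_0(\Z) = \Z$ and equals the ordinary Euler characteristic of the underlying $\Z$-chain complex $C_*(Y, X)$ — because the augmentation kills the difference between $\Z[\pi_1(X)]$-rank bookkeeping and genuine $\Z$-homology, so $\aug(l([Y])) = \sum_i (-1)^i \dim_{\Q} H_i(Y, X; \Q) = \chi(Y,X)$. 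Concretely I would reduce to the finite case by choosing a finite domination $(Y,X) \xrightarrow{i} (K, X) \xrightarrow{r} (Y,X)$ with $ri \simeq \id$, observe that $[C_*(\widetilde Y,\widetilde X)]$ is a summand of $[C_*(\widetilde K, \widetilde X)]$ compatibly with augmentation, and invoke additivity of Euler characteristic (equivalently, that $\chi$ is a homotopy invariant and additive on cofiber sequences) to conclude. This matches the non-equivariant argument of \cite{witpaper}, and no new idea beyond bookkeeping is needed.
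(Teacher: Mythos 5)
Your proposal is correct and follows essentially the same route as the paper: unwind linearization to the relative chain complex $C_*(\widetilde Y,\widetilde X)$, observe that base change along the augmentation (which the paper phrases as "taking $\pi_1$-orbits," and you phrase as cell counting) yields $C_*(Y,X)$, and then invoke the identification $K_0(\ChPerf(\Z))\cong\Z$ via Euler characteristic. The paper handles the finite-domination point you flag implicitly, simply by noting that the class of any perfect $\Z$-complex in $K_0(\ChPerf(\Z))\cong\Z$ is its Euler characteristic, so your extra paragraph on domination is sound but not strictly necessary.
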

\begin{proof}
  The base change $\ChPerf(\Z[\pi_1(X)]) \to \ChPerf(\Z)$ along the augmentation map $\Z[\pi_1(X)] \to \Z$ corresponds to taking the orbits of the $\pi_1$-action. Therefore,
  \[
  \aug(C_*(\widetilde{X}))=C_*(\widetilde{X})/ \pi_1(X) \simeq C_*(X),
  \]
  \[
  \aug(C_*(\widetilde{Y}))=C_*(\widetilde{Y})/ \pi_1(X)\simeq C_*(Y),
  \]
  \[
  \aug(C_*(\widetilde{X}, \widetilde{Y}))=C_*(\widetilde{X}, \widetilde{Y})/ \pi_1(X)=C_*(\widetilde{X})/ \pi_1(X) \Big/ C_*(\widetilde{Y}) / \pi_1(X) \simeq C_*(X, Y).
  \]
  The identification $K_0(\ChPerf(\Z)) \cong \Z$ sends the class of a chain complex to its Euler characteristic, which finishes the proof.
\end{proof}

The authors of \cite{CCMlinearization} introduce a $K$-theory of coefficient systems of rings, which takes values in genuine $G$-spectra for a finite group $G$. This $K$-theory of the constant coefficient system $\underline{\Z}$ is the target of a genuine equivariant linearization map from equivariant $A$-theory, $$L\colon A_G(*)\to K_G(\underline{\mathbb{Z}}).$$ The fixed points of the $G$-spectrum $K_G(\underline{\mathbb{Z}})$ admit a tom Dieck style splitting compatible with that of $A_G(*)$. Upon passage to fixed points, the equivariant linearization map respects the splittings and recovers the non-equivariant linearization maps. To be precise, the following diagram commutes:
\begin{equation} \label{diag: splitting}
    \begin{tikzcd}
        A_G(*)^H  \arrow[r]{}{L^H} \arrow[d,"\simeq"]&K_G(\underline{\mathbb{Z}})^H\arrow[d,"\simeq"]\\
        \displaystyle\prod_{(J) \in C_H} A(BWJ) \arrow[r]{}{\prod l_J} & \displaystyle\prod_{(J)\in C_H} K(\mathbb{Z}[WJ]).
    \end{tikzcd}
\end{equation} 
The augmentation map $\Z [WJ] \to \Z$ induces the following splitting on $K_0$:
\[
K_0(\mathbb{Z}[WJ]) \cong K_0(\Z) \oplus \widetilde{K_0}(\mathbb{Z}[WJ]) \cong \Z \oplus \widetilde{K_0}(\mathbb{Z}[WJ]).
\]
By putting all of these together we obtain a splitting (see \cite[Remark 4.9]{CCMlinearization})
\[
\displaystyle\prod_{(J)\in C_H} K_0(\mathbb{Z}[WJ]) \cong \Big( \displaystyle\prod_{(J)\in C_H} \Z \Big) \oplus \big( \displaystyle\prod_{(J)\in C_H} \widetilde{K_0}(\mathbb{Z}[WJ]) \Big) \cong \Burn(H) \oplus \big( \displaystyle\prod_{(J)\in C_H} \widetilde{K_0}(\mathbb{Z}[WJ]) \Big),
\]
where $\Burn(H)$ is the Burnside ring.

\begin{theorem}
    There is a map of $K$-theory spectra
    \[
    K^{\square}(\MfldH) \to K_G(\underline{\mathbb{Z}})^H
    \]
    that on $\pi_0$ agrees with the equivariant Euler characteristic on the $\Burn(H)$-component of the target.
\end{theorem}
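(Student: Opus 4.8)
The plan is to assemble the map as the composite
\[
K^{\square}(\MfldH) \xrightarrow{\alpha_H} A_G(*)^H \xrightarrow{L^H} K_G(\underline{\mathbb{Z}})^H,
\]
where $\alpha_H$ is the map of \autoref{alpha} and $L^H$ is the fixed-point equivariant linearization map of \cite{CCMlinearization}. The existence of such a map of spectra is then automatic; the content of the theorem is the identification on $\pi_0$, so the bulk of the proof is a diagram chase through the tom Dieck style splittings. Concretely, I would fix a class $[M] \in K_0^\square(\MfldH) \cong \SKH$ (using \autoref{thm:PresSK}) and track its image componentwise through diagram \eqref{diag: splitting}.

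First I would combine \autoref{alpha} with \autoref{tj}: the composite $K^\square(\MfldH)\xrightarrow{\alpha_H} A_G(*)^H \xrightarrow{t_J} A(BWJ)$ is induced, on the level of categories, by sending $M$ to $M_+$ and then to $((M_+)^{J}/\cup_{K \gneq J}(M_+)^{K})_{hWJ}$. Since taking $J$-fixed points of $M_+ = M \sqcup *$ just gives $(M^J)_+$, and the union of the larger fixed sets corresponds to the subspace $\bigcup_{K\gneq J} M^K \subseteq M^J$, this is the retractive space $(M^{J}/\bigcup_{K \gneq J}M^{K})_{hWJ}$ over $BWJ$ — exactly the stratum data appearing in the Lück formula \eqref{Eulerformula}. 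Next I would apply \autoref{linEuler} (with $X = BWJ$ and $Y$ the above homotopy orbit space, after composing with augmentation $K_0(\Z[WJ])\to K_0(\Z)$): on $\pi_0$ the $J$-component of our composite sends $[M]$ to
\[
\chi\!\left(\bigl(M^{J}/\textstyle\bigcup_{K \gneq J}M^{K}\bigr)_{hWJ},\ BWJ\right) = \chi\!\left(M^{J}/WJ,\ \bigcup_{K \gneq J}M^{K}/WJ\right),
\]
using that $(-)_{hWJ} = EWJ\times_{WJ}(-)$ is a free quotient and that relative Euler characteristic of a homotopy orbit of a free-ish action equals the relative Euler characteristic of the strict orbit (the $WJ$-action is free away from the subspace being collapsed, and the fibration $BWJ \to \mathrm{pt}$ has $\chi(BWJ)$ cancelling appropriately — this needs the standard fact $\chi(E_{hWJ},X_{hWJ})=\chi(E/WJ, X/WJ)$ for finite groups acting, which holds since $\chi(BWJ)$ is not in general $1$, so I must be careful to use the \emph{relative} statement where the $BWJ$ factors cancel). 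Comparing with \eqref{Eulerformula}, the tuple of these numbers over $(J)\in C_H$ is exactly $\mathrm{char}^H(\chi_H(M))$ under the identification $\prod_{(J)\in C_H}\Z \cong \Burn(H)$ from \cite[\S1]{luck2005burnside}, which by diagram \eqref{diag: splitting} and the displayed splitting of $\prod_{(J)} K_0(\Z[WJ])$ is precisely the $\Burn(H)$-component of the image of $[M]$ in $K_G(\underline{\Z})^H$.

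The main obstacle I anticipate is the Euler-characteristic bookkeeping in passing from homotopy orbits to strict orbits: one must verify that $\chi\bigl((M^J/\bigcup_{K\gneq J}M^K)_{hWJ},\, BWJ\bigr)$ equals the \emph{relative} orbit Euler characteristic $\chi(M^J/WJ,\bigcup_{K\gneq J}M^K/WJ)$ rather than acquiring a spurious factor of $\chi(BWJ)$. This is where the relative nature of \autoref{linEuler} is essential — the absolute Euler characteristics of the homotopy orbit spaces are not finite/well-behaved, but the relative pair collapses the $BWJ$ contributions. I would handle this by observing that $WJ$ acts freely on $M^J \setminus \bigcup_{K\gneq J}M^K$, so on the relative pair the Borel construction is, up to homotopy over $BWJ$, fiberwise the quotient, and relative Euler characteristic is multiplicative/additive in a way that makes the base contribution cancel; alternatively one can cite the compatibility of \eqref{diag: splitting} directly, since \cite{CCMlinearization} already records that $L^H$ on $\pi_0$ lands in $\Burn(H)$ via exactly the fixed-point Euler characteristics. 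A secondary point to check is that $(M\sqcup *)^J$ and the relevant unions of larger fixed sets are genuinely finite $WJ$-CW (hence finitely dominated retractive spaces), which follows from \cite{illman} together with the fact that fixed sets of smooth actions are smooth submanifolds, as recorded in \autoref{section: GSK inv}.
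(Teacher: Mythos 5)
Your proposal is correct and follows essentially the same route as the paper's proof: the map is $L^H\circ\alpha_H$, and the $\pi_0$ identification proceeds by tracing $[M]$ through \autoref{tj}, observing that the $WJ$-action on $M_+^{J}/\bigcup_{K\gneq J}M_+^{K}$ is free away from the collapsed basepoint so the Borel construction relative to $BWJ$ agrees with the strict quotient, applying \autoref{linEuler}, and matching against \eqref{Eulerformula}. The homotopy-orbit versus strict-orbit point you flag is exactly the step the paper singles out, and your resolution via freeness is the one used there.
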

\begin{proof}
Consider the map $K^{\square}(\MfldH) \to K_G(\underline{\mathbb{Z}})^H$ given by $L^H\circ \alpha _H$, where $\alpha_H$ is the map from \autoref{alpha}.
We obtain an explicit description for the following composite map
   %\begin{equation} \label{eq: long composition map}
%K^{\square}(\HMfldbd_n) \xrightarrow[]{\alpha_H} A_G(*)^H \xrightarrow[]{L^H} K_G(\underline{\Z})^H \xrightarrow[]{\simeq} \prod_{(J) \in C_H} K(\Z [WJ]) \xrightarrow[]{pr'_J} K(\Z [WJ]) 
    %\end{equation}
 %Using the commutativity of the diagram  from \autoref{diag: splitting} this is the same as the composition 
    \begin{equation} \label{eq: long composition map2}
K^{\square}(\MfldH) \xrightarrow[]{\alpha_H} A_G(*)^H \xrightarrow[]{\simeq} \prod_{(J) \in C_H} A(BWJ) \xrightarrow[]{pr_J} A(BWJ) \xrightarrow[]{l_J} K(\Z [WJ])
    \end{equation}
at the level of $\pi_0$ for each $(J) \in C_H$. Recall that 
$K_0^{\square}(\MfldH)\cong \SKH$, and on $\pi_0$ the map 
\[
\alpha_H \colon K^{\square}(\MfldH) \rightarrow A_G(*)^H
\]
sends the class of an $H$-manifold $M$ to the class of the $H$-space $M_+$. 
By \autoref{tj}, the map
\[
t_J \colon A_G(*)^H\to A(BWJ) 
\] 
sends the class of $M_+$ to  $((M_+)^{J}/\cup_{K \gneq J} (M_+)^{K})_{hWJ}$, which is a retractive space over $BWJ$. Since $(M_+)^{J}/\cup_{K \gneq J} (M_+)^{K}$ has free $WJ$-action away from the point where the subspace is collapsed, 
% \begin{eqnarray*}
%    \big( (M_+^{J}/\cup_{K \gneq J} M_+^{K}) \times_{WJ} EWJ \big)/BWJ  &\simeq& \big(\big( (M_+^{J}/\cup_{K \gneq J} M_+^{K}) \times EWJ \big)/(\ast\times EWJ)\big)/WJ\\ &\simeq& \big(M_+^{J}/\cup_{K \gneq J} M_+^{K}\big)/WJ  
% \end{eqnarray*}
\begin{equation}
    \big( (M_+^{J}/\cup_{K \gneq J} M_+^{K}) \times_{WJ} EWJ \big)/BWJ  \simeq (M_+^{J}/\cup_{K \gneq J} M_+^{K})/WJ.
\end{equation}
\noindent Hence, the following relative Euler characteristics agree
\begin{eqnarray*}
    \chi((M_+^{J}/\cup_{K \gneq J} M_+^{K}) \times_{WJ} EWJ, BWJ )&=&\chi(M_+^{J}/WJ, \bigcup_{K \gneq J} M_+^{K}/WJ)\\
    &=&\chi(M^{J}/WJ, \bigcup_{K \gneq J} M^{K}/WJ).
\end{eqnarray*}
Thus, by \autoref{linEuler}, under the linearization map followed by the augmentation map,
\[
A_0(BWJ) \xrightarrow[]{l_J} K_0(\mathbb{Z}[WJ]) \cong \Z \oplus \widetilde{K_0}(\mathbb{Z}[WJ]) \xrightarrow[]{\aug_J}  \Z
\]
we have the following assignment
$$(M_+^{J}/\bigcup_{K \gneq J} M_+^{K})_{hWJ} \ \mapsto\ 
 \chi(M^{J}/WJ,\  \bigcup_{K \gneq J} M^{K}/WJ).$$

Therefore, using the description from \autoref{Eulerformula}, on the level of $\pi_0$, the composite map 
\[
K^{\square}(\MfldH) \to K_G(\underline{\mathbb{Z}})^H \cong \displaystyle\prod_{(J)\in C_H} K(\mathbb{Z}[WJ]) \xrightarrow[]{\prod \aug_J} \Burn(H)
\]
sends the class of $[M]$ to the equivariant Euler characteristic of $M$ in the Burnside ring $\Burn(H)$. 
\end{proof}

 \bibliographystyle{alpha}
  \bibliography{biblio}

\newcommand{\etalchar}[1]{$^{#1}$}
\begin{thebibliography}{HMM{\etalchar{+}}22}

\bibitem[Bar17]{clark}
Clark Barwick.
\newblock Spectral {M}ackey functors and equivariant algebraic {$K$}-theory
  ({I}).
\newblock {\em Adv. Math.}, 304:646--727, 2017.

\bibitem[BD17]{badzioch2017fixed}
Bernard Badzioch and Wojciech Dorabia{\l}a.
\newblock Fixed points of the equivariant algebraic {K}-theory of spaces.
\newblock {\em Proceedings of the American Mathematical Society},
  145(9):3709--3716, 2017.

\bibitem[BGM{\etalchar{+}}24]{bgmmz}
Anna~Marie Bohmann, Teena Gerhardt, Cary Malkiewich, Mona Merling, and Inna
  Zakharevich.
\newblock {A Trace Map on Higher Scissors Congruence Groups}.
\newblock {\em International Mathematics Research Notices}, 08 2024.

\bibitem[BO15]{BO}
Anna~Marie Bohmann and Ang\'elica Osorno.
\newblock Constructing equivariant spectra via categorical {M}ackey functors.
\newblock {\em Algebr. Geom. Topol.}, 15(1):537--563, 2015.

\bibitem[Bre72]{bredon1972introduction}
Glen~Eugene Bredon.
\newblock {\em Introduction to Compact Transformation Groups}.
\newblock Pure and Applied Mathematics. Academic Press, 1972.

\bibitem[CC]{CalleChan}
Maxine Calle and David Chan.
\newblock A genuine {$G$}-spectrum for the cut-and-paste {$K$}-theory of
  {$G$}-manifolds.
\newblock {\em arXiv:2508.03621}.

\bibitem[CCM23]{CCMlinearization}
Maxine Calle, David Chan, and Andres Mejia.
\newblock A linearization map for genuine equivariant algebraic {$K$}-theory.
\newblock {\em arXiv preprint arXiv:2309.08025}, 2023.

\bibitem[CF79]{conner1979differentiable}
Pierre~Euclide Conner and Edwin~Earl Floyd.
\newblock {\em Differentiable periodic maps}, volume~29.
\newblock Springer, 1979.

\bibitem[CKMZ23]{CKMZsquarescategories}
Jonathan Campbell, Josefien Kuijper, Mona Merling, and Inna Zakharevich.
\newblock Algebraic {K}-theory for squares categories.
\newblock {\em arXiv preprint arXiv:2310.02852}, 2023.

\bibitem[Deh01]{dehn}
Max~Wilhelm Dehn.
\newblock Ueber den {R}auminhalt.
\newblock {\em Math. Ann.}, 55(3):465--478, 1901.

\bibitem[Dug19]{dugger2019involutions}
Daniel Dugger.
\newblock Involutions on surfaces.
\newblock {\em Journal of Homotopy and Related Structures}, 14:919--992, 2019.

\bibitem[HK97]{hara1997cutting}
Tamio Hara and Hiroaki Koshikawa.
\newblock Cutting and pasting of {$G$} manifolds with boundary.
\newblock {\em Kyushu Journal of Mathematics}, 51(1):165--178, 1997.

\bibitem[HMM{\etalchar{+}}22]{witpaper}
Renee~S. Hoekzema, Mona Merling, Laura Murray, Carmen Rovi, and Julia Semikina.
\newblock {C}ut and paste invariants of manifolds via algebraic {K}-theory.
\newblock {\em Topology and its Applications}, 316:108105, 2022.
\newblock Women in Topology III.

\bibitem[Ill83]{illman}
S\"{o}ren Illman.
\newblock The equivariant triangulation theorem for actions of compact {L}ie
  groups.
\newblock {\em Mathematische Annalen}, 262:487--501, 1983.

\bibitem[J{\"a}n66]{janich1966differenzierbare}
Klaus J{\"a}nich.
\newblock Differenzierbare {M}annigfaltigkeiten mit {R}and als {O}rbitr{\"a}ume
  differenzierbarer {G}-{M}annigfaltigkeiten ohne {R}and.
\newblock {\em Topology}, 5(4):301--320, 1966.

\bibitem[J{\"a}n69]{janich1969invariants}
Klaus J{\"a}nich.
\newblock On invariants with the {N}ovikov additive property.
\newblock {\em Mathematische Annalen}, 184(1):65--77, 1969.

\bibitem[Kan07]{equivcollar}
Marja Kankaanrinta.
\newblock Equivariant collaring, tubular neighbourhood, and gluing theorems for
  proper {L}ie group action.
\newblock {\em Algebraic and {G}eometric {T}opology}, 7, 2007.

\bibitem[KKNO73]{SKbook}
Ulrich Karras, Matthias Kreck, Walter~David Neumann, and Erich Ossa.
\newblock {\em Cutting and pasting of manifolds; {${\rm SK}$}-groups}.
\newblock Publish or Perish, Inc., Boston, Mass., 1973.
\newblock Mathematics Lecture Series, No. 1.

\bibitem[KLM{\etalchar{+}}24]{klmms}
Alexander Kupers, Ezekiel Lemann, Cary Malkiewich, Jeremy Miller, and Robin~J.
  Sroka.
\newblock Scissors automorphism groups and their homology.
\newblock {\em arXiv preprint arXiv:2408.08081}, 2024.

\bibitem[Kom03]{komiya2003cutting}
Katsuhiro Komiya.
\newblock Cutting and pasting of manifolds into {G}-manifolds.
\newblock {\em Kodai Mathematical Journal}, 26(2):230--243, 2003.

\bibitem[Kos78]{kosniowski}
Czes Kosniowski.
\newblock {\em Actions of finite abelian groups}, volume~18 of {\em Research
  Notes in Mathematics}.
\newblock Pitman (Advanced Publishing Program), Boston, Mass.-London, 1978.

\bibitem[Lin76]{lindner1976remark}
Harald Lindner.
\newblock A remark on {M}ackey-functors.
\newblock {\em manuscripta mathematica}, 18(3):273--278, 1976.

\bibitem[LR03]{LuckRosenberg}
Wolfgang L\"uck and Jonathan Rosenberg.
\newblock Equivariant {E}uler characteristics and {$K$}-homology {E}uler
  classes for proper cocompact {$G$}-manifolds.
\newblock {\em Geom. Topol.}, 7:569--613, 2003.

\bibitem[L{\"u}c05]{luck2005burnside}
Wolfgang L{\"u}ck.
\newblock The {B}urnside ring and equivariant stable cohomotopy for infinite
  groups.
\newblock {\em arXiv preprint math/0504051}, 2005.

\bibitem[Mal22]{scissors_thom}
Cary Malkiewich.
\newblock Higher scissors congruence.
\newblock {\em arXiv preprint arXiv:2210.08082}, 2022.

\bibitem[MM19]{MMEquivariantATheory}
Cary Malkiewich and Mona Merling.
\newblock Equivariant {A}-theory.
\newblock {\em Documenta Mathematica}, 24:815--855, 2019.

\bibitem[MM22]{MMhcob}
Cary Malkiewich and Mona Merling.
\newblock The equivariant parametrized {$h$}-cobordism theorem, the
  non-manifold part.
\newblock {\em Adv. Math.}, 399:Paper No. 108242, 42, 2022.

\bibitem[Row70]{rowlett1971additive}
Russell~Johnston Rowlett.
\newblock {\em Additive invariants of manifolds with boundary}.
\newblock PhD thesis, University of Virginia, 1970.

\bibitem[Syd65]{Sydler}
Jean-Pierre Sydler.
\newblock Conditions n\'{e}cessaires et suffisantes pour l'\'{e}quivalence des
  poly\`edres de l'espace euclidien \`a trois dimensions.
\newblock {\em Comment. Math. Helv.}, 40:43--80, 1965.

\bibitem[tD87]{tomDieckbook}
Tammo tom Dieck.
\newblock {\em Transformation groups}, volume~8 of {\em De Gruyter Studies in
  Mathematics}.
\newblock Walter de Gruyter \& Co., Berlin, 1987.

\bibitem[Wal87]{waldhausen}
Friedhelm Waldhausen.
\newblock Algebraic {$K$}-theory of spaces, concordance, and stable homotopy
  theory.
\newblock In {\em Algebraic topology and algebraic {$K$}-theory ({P}rinceton,
  {N}.{J}., 1983)}, volume 113 of {\em Ann. of Math. Stud.}, pages 392--417.
  Princeton Univ. Press, Princeton, NJ, 1987.

\bibitem[Web00]{webb2000guide}
Peter Webb.
\newblock A guide to {M}ackey functors.
\newblock In {\em Handbook of algebra}, volume~2, pages 805--836. Elsevier,
  2000.

\bibitem[Zak12]{inna_scissorsKth}
Inna Zakharevich.
\newblock {Scissors congruence as $K$-theory}.
\newblock {\em Homology, Homotopy and Applications}, 14(1):181 -- 202, 2012.

\end{thebibliography}

\begingroup%
\setlength{\parskip}{\storeparskip}% Restore \parskip within this scope

\end{document}